\DeclareMathOperator{\dif}{d}
\newcommand{\Cal}{\mathcal{C}}
\renewcommand{\H}{\mathscr{H}}
\newcommand{\V}{\mathscr{V}}
\newcommand{\Q}{\mathscr{Q}}
\newcommand{\F}{\mathscr{F}}
\newcommand{\Fa}{\mathcal{F}}
\newcommand{\J}{\mathcal{J}}
\def \a{\alpha}
\def \b{\beta}
\def \phi{\varphi}
\def \Phi{\varPhi}
\def \p{\pi}
\def \r{\rho}
\def \s{\sigma}
\def \t{\tau}
\def \D{\Delta}
\def \R{\mathbb{R}}
\def \Hq{\mathbb{H}\,}
\def \C{\mathbb{C}\,}
\def\widecheckg{g^{\hspace*{-2.5pt}\vbox to 5pt{\hbox to
0pt{\LARGE$\check{}$}}}\hspace*{2pt}}
\def\widecheckl{\lambda^{\hspace*{-3.5pt}\vbox to 8pt{\hbox to
0pt{\LARGE$\check{}$}}}\hspace*{2pt}}
\begin{document}

\title{Twistorial maps between quaternionic manifolds}
\author{S.~Ianu\c s, S.~Marchiafava, L.~Ornea, R.~Pantilie}
\thanks{S.I., L.O.\ and R.P.\ acknowledge that this work was partially supported by a
CEx Grant no.\ 2-CEx 06-11-22/25.07.2006.\\
\indent
S.M. acknowledges that this work was done under the program of GNSAGA-INDAM of
C.N.R. and PRIN05 ''Geometria Riemanniana e strutture
differenziabili'' of MIUR (Italy).}
\email{\href{mailto:istere@yahoo.com}{istere@yahoo.com},
       \href{mailto:marchiaf@mat.uniroma1.it}{marchiaf@mat.uniroma1.it},
       \href{mailto:liviu.ornea@imar.ro}{liviu.ornea@imar.ro},
       \href{mailto:radu.pantilie@imar.ro}{radu.pantilie@imar.ro}}
\address{S.~Ianu\c s, Universitatea din Bucure\c sti, Facultatea de Matematic\u a, Str.\ Academiei nr.\ 14,
70109, Bucure\c sti, Rom\^ania}
\address{S.~Marchiafava, Dipartimento di Matematica, Istituto ``Guido~Castelnuovo'',
Universit\`a degli Studi di Roma ``La Sapienza'', Piazzale Aldo~Moro, 2 - I 00185 Roma - Italia}
\address{L.~Ornea, Universitatea din Bucure\c sti, Facultatea de Matematic\u a, Str.\ Academiei nr.\ 14,
70109, Bucure\c sti, Rom\^ania, \emph{also,}
 Institutul de Matematic\u a ``Simion~Stoilow'' al Academiei Rom\^ane,
C.P. 1-764, 014700, Bucure\c sti, Rom\^ania}
\address{R.~Pantilie, Institutul de Matematic\u a ``Simion~Stoilow'' al Academiei Rom\^ane,
C.P. 1-764, 014700, Bucure\c sti, Rom\^ania}
\subjclass[2000]{Primary 53C28, Secondary 53C26}
\keywords{twistorial map, quaternionic manifold, quaternionic map}

\newtheorem{thm}{Theorem}[section]
\newtheorem{lem}[thm]{Lemma}
\newtheorem{cor}[thm]{Corollary}
\newtheorem{prop}[thm]{Proposition}

\theoremstyle{definition}

\newtheorem{defn}[thm]{Definition}
\newtheorem{rem}[thm]{Remark}
\newtheorem{exm}[thm]{Example}

\numberwithin{equation}{section}

\maketitle
\thispagestyle{empty}
\vspace{-4mm}
\begin{center}
\emph{This paper is dedicated to the memory of Kris Galicki.}
\end{center}

\section*{Abstract}
\begin{quote}
{\footnotesize  We introduce a natural notion of \emph{quaternionic map} between almost quaternionic
manifolds and we prove the following, for maps of rank at least one:\\
\indent
$\bullet$ A map between quaternionic manifolds endowed with the integrable
almost twistorial structures is twistorial if and only if it is quaternionic.\\
\indent
$\bullet$ A map between quaternionic manifolds endowed with the nonintegrable almost twistorial structures
is twistorial if and only if it is quaternionic and totally-geodesic.\\
As an application, we describe all the quaternionic maps between open sets of quaternionic projective spaces.}
\end{quote}

\section*{Introduction}

\indent
An \emph{almost quaternionic structure} on a manifold is a reduction of its frame bundle to
the group ${\rm Sp}(1)\cdot{\rm GL}(m,\Hq)$\,. The integrability condition for an almost quaternionic
structure (that is, the condition that the corresponding reduction of the frame bundle be given
by the cocycle determined by an atlas) is very restrictive \cite{Mar-70}\,. Nevertheless, as
${\rm Sp}(1)\cdot{\rm GL}(m,\Hq)$ is a Lie group of order two, there exists only one more general
notion of integrability for an almost quaternionic structure, which amounts to the existence of a
compatible torsion free connection (see \cite{Sal-dg_qm}\,). In dimension at least eight,
such a connection is called \emph{quaternionic} whilst, in dimension four, a quaternionic connection
is an anti-self-dual Weyl connection. A \emph{quaternionic manifold} is a manifold endowed with an
almost quaternionic structure and a (compatible) quaternionic connection.\\
\indent
It is a basic fact that the problem of the existence of a quaternionic connection on a manifold,
endowed with an almost quaternionic structure, admits a twistorial interpretation
(see Remark \ref{rem:twist_1-int}(2)\,, below).\\
\indent
In this paper, we introduce a natural notion of \emph{quaternionic map} (Definition \ref{defn:qm}\,)
with respect to which the class of quaternionic manifolds becomes a category. Furthermore, we show
that the quaternionic maps, of rank at least one, are twistorial in a natural way; that is, they are
characterised by the existence of a holomorphic lift between
the corresponding twistor spaces (Theorem \ref{thm:qtwist}\,).\\
\indent
The paper is organised as follows. In Section \ref{section:1} we review some facts on quaternionic
vector spaces (see \cite{AleMar-Annali96}\,). In Section \ref{section:2}\,, after recalling the definition
of almost quaternionic structure, we introduce the notion of quaternionic map and we prove its first
properties (Proposition \ref{prop:firstpropr}\,). Also, in Section \ref{section:2} we recall the
two almost twistorial structures associated to a quaternionic manifold, one of which
(Example \ref{exm:qtwiststr}\,) is integrable, whilst the other one (Example \ref{exm:qtwiststr'}\,)
is nonintegrable.\\
\indent
In Section \ref{section:3} we study twistorial maps between quaternionic manifolds. Besides the
above mentioned relation between quaternionic and twistorial maps, with respect to the
(integrable) twistorial structures, we prove that a map, of rank at least one, is twistorial,
with respect to the nonintegrable almost twistorial structures,
if and only if it is quaternionic and totally geodesic (Theorem \ref{thm:qtwist'}\,).
Another result we obtain is that any quaternionic map is real-analytic, at least, outside the
frontier of the zero set of its differential (Corrolary \ref{cor:q_analytic}\,).\\
\indent
Examples of quaternionic maps are given in Section \ref{section:4}\,. There, we, also, apply
results of Section \ref{section:3} to describe all the quaternionic maps between open sets
of quaternionic projective spaces (Theorem \ref{thm:HP^m}\,).\\
\indent
Finally, in the Appendix we discuss how the quaternionic maps are related to other, more or less similar,
notions. We conclude that the quaternionic maps are the natural morphisms of Quaternionic Geometry.

\section{Quaternionic vector spaces} \label{section:1}

\indent
In this section, we review some facts, from \cite{AleMar-Annali96}\,, on quaternionic vector spaces
and quaternionic linear maps. Unless otherwise stated, all the vector spaces and linear maps are
assumed real.

\begin{defn}
Let $A$ and $B$ be (real or complex, unital) associative algebras. Two morphisms $\r,\s:A\to B$
are called \emph{$A$-equivalent} if there exists an automorphism $\t:A\to A$ such that $\s=\r\circ\t$\,.
\end{defn}

\indent
Let $\Hq$ be the division algebra of quaternions. The group of
automorphisms of $\Hq$ is ${\rm SO}(3)$\,, acting trivially on $1$ and canonically
on ${\rm Im}\Hq(=\R^3)$\,; note that, all the automorphisms of $\Hq$ are interior.\\
\indent
The following definition is due to \cite{AleMar-Annali96}\,.

\begin{defn}
1) A \emph{linear hypercomplex structure} on a vector space $V$ is a morphism of associative algebras
from $\Hq$ to ${\rm End}(V)$\,. A vector space endowed with a linear hypercomplex structure is
called a \emph{hypercomplex vector space}.\\
\indent
2) A \emph{linear quaternionic structure} on a vector space $V$ is an equivalence class
of $\Hq$-equivalent morphisms of associative algebras from $\Hq$ to ${\rm End}(V)$\,.
Any representative of the class defining a linear quaternionic structure is called an
\emph{admissible} linear hypercomplex structure (of the given linear quaternionic structure).
A vector space endowed with a linear quaternionic structure is called a \emph{quaternionic vector space}.
\end{defn}

\indent
Obviously, a hypercomplex vector space is just a left $\Hq$-module.

\begin{exm} \label{exm:qlinear}
The natural structure of left $\Hq$-module on $\Hq^{\!m}$ gives the (natural) linear hypercomplex structure
of $\Hq^{\!m}$, $(m\geq0)$. Moreover, any hypercomplex vector space is $\Hq$-linearly isomorphic to $\Hq^{\!m}$,
for some $m\geq0$\,.\\
\indent
The linear hypercomplex structure of $\Hq^{\!m}$ determines the (natural) linear quaternionic structure of $\Hq^{\!m}$.
\end{exm}

\indent
A \emph{hypercomplex linear map} $f:V\to W$ between hypercomplex vector spaces is an $\Hq$-linear map.\\
\indent
Let $V$ be a quaternionic vector space and let $\r:\Hq\to{\rm End}(V)$ be an admissible linear hypercomplex
structure. As ${\rm SO}(3)$ acting on $\Hq$, preserves $1$ and ${\rm Im}\Hq$, the vector spaces
$Q_V=\r({\rm Im}\Hq)$ and  $\widetilde{Q}_V=\r(\Hq)$ depend only of the linear quaternionic structure induced
by $\rho$ on $V$.
Furthermore, $\widetilde{Q}_V\subseteq{\rm End}(V)$ is a division algebra (noncanonically) isomorphic to $\Hq$
and $Q_V$ is a three-dimensional oriented Euclidean vector space for which any oriented orthonormal basis
$(I,J,K)$ satisfies the quaternionic identities (that is, $I^2=J^2=K^2=IJK=-{\rm Id}_V$\,).
Similarly, the unit sphere $Z_V=\r\bigl(S^2\bigr)$ is well-defined.

\begin{defn}[cf.\ \cite{AleMar-Annali96}\,]
Let\/ $V$ and\/ $W$ be quaternionic vector spaces and let $t:V\to W$ and $T:Z_V\to Z_W$ be maps.\\
\indent
We say that $t$ is a \emph{quaternionic linear map, with respect to $T$,} if $t$ is linear and
$$t\circ J=T(J)\circ t\;,$$ for any $J\in Z_V$\,.
\end{defn}

\begin{prop} \label{prop:qlinearmaps1}
Let $V$ and $W$ be quaternionic vector spaces and let $t:V\to W$ be a nonzero linear map.\\
\indent
{\rm (i)} If $t$ is quaternionic linear, with respect to $T_1,T_2:Z_V\to Z_W$\,, then $T_1=T_2$\,.\\
\indent
{\rm (ii)} If $t$ is quaternionic linear, with respect to some map $T:Z_V\to Z_W$\,, then
$T$ can be uniquely extended to an orientation preserving linear isometry from $Q_V$ to $Q_W$\,.
\end{prop}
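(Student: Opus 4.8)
The plan is to organise everything around a single \emph{cancellation principle}. Since $\widetilde{Q}_W=\rho({\Hq})$ is a division algebra isomorphic to ${\Hq}$, every nonzero element of $\widetilde{Q}_W$ is invertible in $\widetilde{Q}_W$, hence bijective, in particular injective, as an endomorphism of $W$. Because $t\neq0$, its image $t(V)$ is a nonzero subspace of $W$. Consequently, if $X\in\widetilde{Q}_W$ satisfies $X\circ t=0$, then $X$ vanishes on the nonzero subspace $t(V)$, which forces $X=0$. Every assertion below reduces to applying this principle to a suitable difference that a priori lies in $\widetilde{Q}_W$.

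For {\rm (i)}, suppose $t\circ J=T_1(J)\circ t$ and $t\circ J=T_2(J)\circ t$ for every $J\in Z_V$. Subtracting gives $\bigl(T_1(J)-T_2(J)\bigr)\circ t=0$. Since $T_1(J),T_2(J)\in Z_W\subseteq Q_W\subseteq\widetilde{Q}_W$, their difference lies in $\widetilde{Q}_W$, so the cancellation principle yields $T_1(J)=T_2(J)$; as $J$ was arbitrary, $T_1=T_2$.

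For {\rm (ii)}, I would first record two structural properties of $T$, each obtained from the cancellation principle. Comparing $t\circ(-J)=-(t\circ J)=\bigl(-T(J)\bigr)\circ t$ with $t\circ(-J)=T(-J)\circ t$ shows $T(-J)=-T(J)$, so $T$ is odd. Next, if $I,J\in Z_V$ are such that $IJ\in Z_V$ as well, then $t\circ(IJ)=T(I)\circ T(J)\circ t$ whereas $t\circ(IJ)=T(IJ)\circ t$, and since $T(IJ)-T(I)T(J)\in\widetilde{Q}_W$ one concludes $T(IJ)=T(I)T(J)$. I then choose an oriented orthonormal basis $(I,J,K)$ of $Q_V$; this is a quaternionic triple contained in $Z_V$, with $IJ=K$, $JK=I$, $KI=J$. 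Oddness and the product formula show that $\bigl(T(I),T(J),T(K)\bigr)$ again satisfies the quaternionic identities: each $T(I),T(J),T(K)\in Z_W$ squares to $-{\rm Id}_W$, anticommutativity follows from $T(J)T(I)=T(JI)=T(-K)=-T(K)=-T(I)T(J)$, and $T(I)T(J)T(K)=T(K)^2=-{\rm Id}_W$. Hence $\bigl(T(I),T(J),T(K)\bigr)$ is an oriented orthonormal basis of $Q_W$.

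Finally I would define $\Lambda\colon Q_V\to Q_W$ by $\Lambda(aI+bJ+cK)=aT(I)+bT(J)+cT(K)$; by construction this is a linear isomorphism carrying one oriented orthonormal basis onto another, hence an orientation preserving isometry. It remains to check that $\Lambda$ genuinely extends $T$, i.e.\ $\Lambda|_{Z_V}=T$. For $J'=aI+bJ+cK\in Z_V$ one has $t\circ J'=\bigl(aT(I)+bT(J)+cT(K)\bigr)\circ t=\Lambda(J')\circ t$ and also $t\circ J'=T(J')\circ t$; since $T(J')-\Lambda(J')\in Q_W\subseteq\widetilde{Q}_W$, the cancellation principle gives $T(J')=\Lambda(J')$. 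Thus $\Lambda$ extends $T$, and uniqueness is immediate because $Z_V$ spans $Q_V$, so any linear extension is determined by its values on $Z_V$. The only delicate point is the sign and product bookkeeping ensuring the image triple is a genuinely \emph{oriented} orthonormal frame and that $\Lambda$ agrees with $T$ on all of $Z_V$ rather than merely on the chosen basis; both, however, follow cleanly from the cancellation principle once it is in place.
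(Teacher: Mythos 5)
Your proposal is correct and follows essentially the same route as the paper: both rest on the observation that a nonzero element of the division algebra $\widetilde{Q}_W$ is injective, so anything in $\widetilde{Q}_W$ annihilating the nonzero image of $t$ must vanish, and both use this to get uniqueness, the quaternionic identities for the image triple, and the linearity $T(aI+bJ+cK)=aT(I)+bT(J)+cT(K)$. You merely spell out some intermediate steps (oddness, multiplicativity, the explicit definition of the extension $\Lambda$) that the paper compresses into ``as above'' and ``the proof follows.''
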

\begin{proof}
Let $J\in Z_V$\,. As $t\circ J=T_k(J)\circ t$\,, $(k=1,2)$\,, and $t\neq0$ we have that the kernel of $T_1(J)-T_2(J)$
is nonzero. But $T_1(J)-T_2(J)$ is in $\widetilde{Q}_W$ which is a division algebra. Thus $T_1(J)=T_2(J)$\,.
This proves assertion (i)\,.\\
\indent
To prove (ii) we, firstly, obtain, as above, that if $(I,J,K)$ satisfy the quaternionic identities then, also,
$\bigl(T(I),T(J),T(K)\bigr)$ satisfy the quaternionic identities.\\
\indent
Now, let $(a,b,c)\in S^2$. Then $t\circ(aI+bJ+cK)=T(aI+bJ+cK)\circ t$\,. On the other hand, we have
\begin{equation*}
t\circ(aI+bJ+cK)=a\,t\circ I+b\,t\circ J+c\,t\circ K=\bigl(a\,T(I)+b\,T(J)+c\,T(K)\bigr)\circ t\;.
\end{equation*}
\indent
Thus $T(aI+bJ+cK)\circ t=\bigl(a\,T(I)+b\,T(J)+c\,T(K)\bigr)\circ t$ which, because $t\neq0$\,, implies
that $T(aI+bJ+cK)=a\,T(I)+b\,T(J)+c\,T(K)$\,. The proof follows.
\end{proof}

\indent
Next, we prove the following:

\begin{prop}[\,\cite{AleMar-Annali96}\,] \label{prop:qlinearmaps2}
{\rm (i)} For any quaternionic vector space $V$ there exists a quaternionic linear isomorphism
from $V$ to $\Hq^{\!m}$ (endowed with its natural linear quaternionic structure),
for some $m\geq0$\,\\
\indent
{\rm (ii)} Any quaternionic linear map $t:\Hq^{\!m}\to\Hq^{\!n}$
is given by $t(X)=aXA$\,, $(X\in\Hq^{\!m})$\,, for some $a\in\Hq$ and an $m\times n$ matrix $A$\,,
whose entries are quaternions.
\end{prop}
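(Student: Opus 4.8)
The plan is to reduce everything to facts about hypercomplex vector spaces, i.e.\ left $\Hq$-modules, where the structure is transparent thanks to Example \ref{exm:qlinear}.

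For (i), I would first fix an admissible linear hypercomplex structure $\rho:\Hq\to{\rm End}(V)$ representing the given linear quaternionic structure, thereby viewing $V$ as a left $\Hq$-module. By Example \ref{exm:qlinear} there is an $\Hq$-linear (hypercomplex) isomorphism $f:V\to\Hq^{\!m}$ for some $m\geq0$. It then suffices to note that a hypercomplex isomorphism is automatically quaternionic linear: if $\rho_0$ denotes the natural hypercomplex structure of $\Hq^{\!m}$ and $J=\rho(u)$ with $u$ a unit imaginary quaternion, then $\Hq$-linearity of $f$ gives $f\circ J=\rho_0(u)\circ f$, so $f$ is quaternionic linear with respect to the map $T:Z_V\to Z_{\Hq^{\!m}}$, $\rho(u)\mapsto\rho_0(u)$. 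Since $T$ is a bijection, $f$ is a quaternionic linear isomorphism.

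For (ii), the case $t=0$ is trivial (take $A=0$), so assume $t\neq0$. By Proposition \ref{prop:qlinearmaps1}(ii) the associated $T$ extends to an orientation preserving linear isometry from $Q_{\Hq^{\!m}}$ to $Q_{\Hq^{\!n}}$. Identifying both $Q$'s with ${\rm Im}\Hq$ through the natural structures, $T$ becomes an orientation preserving isometry of ${\rm Im}\Hq$; as recalled above, every such isometry is an interior automorphism of $\Hq$, so there is a unit quaternion $a$ with $T(\rho(u))=\rho(aua^{-1})$ for all unit imaginary $u$, where I keep writing $\rho$ for left multiplication on $\Hq^{\!m}$ and on $\Hq^{\!n}$. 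Now set $s(X)=a^{-1}t(X)$. Taking $J=\rho(u)$, the relation $t\circ J=T(J)\circ t$ reads $t(uX)=(aua^{-1})\,t(X)$, whence $s(uX)=a^{-1}(aua^{-1})t(X)=u\,s(X)$; since this holds for all imaginary $u$ and, trivially, for $u=1$, $\R$-linearity yields $s(qX)=q\,s(X)$ for every $q\in\Hq$, so $s$ is $\Hq$-linear. A left $\Hq$-linear map $\Hq^{\!m}\to\Hq^{\!n}$ is right multiplication by the $m\times n$ matrix $A$ whose $i$-th row is $s(e_i)$, that is $s(X)=XA$; hence $t(X)=a\,s(X)=aXA$, as required.

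The points needing care are the bookkeeping of left versus right $\Hq$-multiplication, and in particular checking that multiplying $t$ on the left by $a^{-1}$ exactly cancels the conjugation appearing in $T$, together with the passage from ``$s$ commutes with the imaginary units'' to ``$s$ is $\Hq$-linear'', which rests on $\R$-linearity. I do not expect a genuine obstacle here: the only structural inputs are Proposition \ref{prop:qlinearmaps1}(ii) and the description of the automorphisms of $\Hq$ as the interior ones, both available above.
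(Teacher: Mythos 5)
Your proposal is correct and follows essentially the same route as the paper: assertion (i) is reduced to the classification of hypercomplex vector spaces, and for (ii) one uses Proposition \ref{prop:qlinearmaps1}(ii) together with the fact that the automorphisms of $\Hq$ are interior to find $a\in{\rm Sp}(1)$ with $T$ given by conjugation by $a$, after which $a^{-1}t$ is left $\Hq$-linear and hence right multiplication by a matrix. The extra bookkeeping you supply (left versus right multiplication, extending from imaginary units to all of $\Hq$ by $\R$-linearity) is exactly the content of the paper's ``one checks immediately''.
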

\begin{proof}
Assertion (i) follows quickly from the fact that any hypercomplex vector space is $\Hq$-linearly isomorphic
to $\Hq^{\!m}$, for some $m\geq0$\,.\\
\indent
Let $t:\Hq^{\!m}\to\Hq^{\!n}$ be a quaternionic linear map, with respect to some map
$T:S^2(=Z_{\Hq^{\!m}})\to S^2(=Z_{\Hq^{\!n}})$.\\
\indent
If $t=0$ then by taking, for example, $a=1$ and $A=0$ assertion (ii) is trivially satisfied.
If $t\neq0$ then, by Proposition \ref{prop:qlinearmaps1}(ii)\,, there exists $a\in{\rm Sp}(1)$
such that $T({\rm i})=a{\rm i}a^{-1}$, $T({\rm j})=a{\rm j}a^{-1}$, $T({\rm k})=a{\rm k}a^{-1}$
and one checks immediately that $t'=a^{-1}t$ is $\Hq$-linear.\\
\indent
Let $A$ be the matrix of $t':\Hq^{\!m}\to\Hq^{\!m}$ with respect to the canonical bases of the
free (left) $\Hq$-modules $\Hq^{\!m}$ and $\Hq^{\!m}$. Then $t:\Hq^{\!m}\to\Hq^{\!n}$
is given by $t(X)=aXA$\,, $(X\in\Hq^{\!m})$\,, and the proof is complete.
\end{proof}

\indent
From Proposition \ref{prop:qlinearmaps2} we obtain the following result.

\begin{cor}[\,\cite{AleMar-Annali96}\,] \label{cor:qlineargroup}
The group of quaternionic linear automorphisms of $\Hq^{\!m}$ is equal to
${\rm Sp}(1)\cdot{\rm GL}(m,\Hq)$\,.
\end{cor}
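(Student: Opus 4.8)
The plan is to read the result directly off Proposition \ref{prop:qlinearmaps2}(ii), specialised to $n=m$, together with an invertibility analysis. Recall that ${\rm Sp}(1)\cdot{\rm GL}(m,\Hq)$ is, by definition, the subgroup of ${\rm GL}(\Hq^{\!m})$ consisting of the maps $X\mapsto aXA$ with $a\in{\rm Sp}(1)$ and $A\in{\rm GL}(m,\Hq)$; thus the Corollary amounts to a set-theoretic equality between this subgroup and the group of quaternionic linear automorphisms of $\Hq^{\!m}$.

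First I would prove the inclusion of the automorphism group into ${\rm Sp}(1)\cdot{\rm GL}(m,\Hq)$. Let $t$ be a quaternionic linear automorphism of $\Hq^{\!m}$ (we may assume $m\geq1$, the case $m=0$ being trivial). Since $t$ is bijective, $t\neq0$, so by Proposition \ref{prop:qlinearmaps2}(ii) and the normalisation carried out in its proof we may write $t(X)=aXA$ with $a\in{\rm Sp}(1)$ and $A$ an $m\times m$ quaternionic matrix. Then $t'=a^{-1}t$, given by $t'(X)=XA$, is $\Hq$-linear; as $a$ is a unit and $t$ is bijective, $t'$ is bijective, whence $A\in{\rm GL}(m,\Hq)$. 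Therefore $t\in{\rm Sp}(1)\cdot{\rm GL}(m,\Hq)$.

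For the reverse inclusion, I would check that every map of the indicated form is a quaternionic linear automorphism. Given $a\in{\rm Sp}(1)$ and $A\in{\rm GL}(m,\Hq)$, the map $t(X)=aXA$ is $\R$-linear with two-sided inverse $X\mapsto a^{-1}XA^{-1}$, so it is bijective. To see it is quaternionic linear, note that for $J\in Z_{\Hq^{\!m}}$ (acting by componentwise left multiplication) and all $X$ we have $t(JX)=aJXA=(aJa^{-1})\,aXA=(aJa^{-1})\,t(X)$; hence $t\circ J=T(J)\circ t$ with $T(J)=aJa^{-1}$. Conjugation by the unit quaternion $a$ is an orientation-preserving isometry of ${\rm Im}\Hq$, so it maps $Z_{\Hq^{\!m}}$ to itself, and $t$ is quaternionic linear with respect to this $T$. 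Combining the two inclusions gives the asserted equality.

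I do not expect a serious obstacle here, since the content has been front-loaded into Proposition \ref{prop:qlinearmaps2}. The only points requiring care are the bookkeeping that invertibility of $t$ forces $A\in{\rm GL}(m,\Hq)$ rather than merely a general matrix, and the correct reading of the defining convention for ${\rm Sp}(1)\cdot{\rm GL}(m,\Hq)$ as the image of the action $(a,A)\cdot X=aXA$; one may additionally remark that this action has kernel $\{(\pm1,\pm I)\}$, which is why the group is the quotient $\bigl({\rm Sp}(1)\times{\rm GL}(m,\Hq)\bigr)/\mathbb{Z}_2$, though this is not needed for the stated set equality.
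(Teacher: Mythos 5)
Your proposal is correct and follows exactly the route the paper intends: the paper gives no proof, stating only that the corollary is obtained from Proposition \ref{prop:qlinearmaps2}, and your argument supplies precisely the two inclusions (invertibility forcing $A\in{\rm GL}(m,\Hq)$ in the formula $t(X)=aXA$, and the direct verification that $X\mapsto aXA$ is quaternionic linear with respect to $T(J)=aJa^{-1}$) that make this deduction complete.
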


\indent
Let $V$ be a quaternionic vector space and let $\r:\Hq\to{\rm End}(V)$ be an admissible linear hypercomplex
structure. Obviously, $\r\otimes\r:\Hq\otimes\Hq\to{\rm End}(V\otimes V)$ is also a morphism of associative
algebras (the tensor products are taken over $\R$). As ${\rm SO}(3)$ acts on $\Hq$ by isometries,
$\r\otimes\r$ maps the Euclidean structure
$$1\otimes 1+{\bf i}\otimes{\bf i}+{\bf j}\otimes{\bf j}+{\bf k}\otimes{\bf k}$$
on the (real) dual of $\Hq$ onto an endomorphism $\b$ of
$V\otimes V$ which depends only of the linear quaternionic structure on $V$. Let
$b\in{\rm Hom}(V\otimes V,V\odot V)$ be the composition of $\b$, to the left, with
the projection $V\otimes V\to V\odot V$, where $V\odot V$ is the second symmetric power of $V$.
Note that, $b$ is also characterised by
\begin{equation} \label{e:projector}
b(X,Y)=\tfrac12\sum_{i=0}^3\bigl(E_i(X)\otimes E_i(Y)+E_i(Y)\otimes E_i(X)\bigr)\;,
\end{equation}
for any $X,Y\in V$, where $E_0=\r(1)$\,, $E_1=\r({\rm i})$\,, $E_2=\r({\rm j})$\,, $E_3=\r({\rm k})$\,.

\begin{prop} \label{prop:projector}
Let\/ $V$ be a quaternionic vector space. For any $J\in Z_V$ we denote by $V^{1,0;J}$ and $V^{0,1;J}$
the eigenspaces of $J$ with respect to ${\rm i}$ and $-{\rm i}$\,, respectively.\\
\indent
{\rm (i)} The subspace $b(V\otimes V)$ of $V\odot V$ is equal to the space of Hermitian contravariant
symmetric $2$-forms on $V$ (that is, elements of $V\odot V$ invariant under $J\otimes J$, for any
$J\in Z_V$).\\
\indent
{\rm (ii)} For any $J\in Z_V$ and $\a\in V^*$ we have
$\iota_{\a}\bigl(b(V^{0,1;J},V^{0,1;J})\bigr)=0$.\\
\indent
{\rm (iii)} Let $J,K\in Z_V$ be orthogonal on each other. Then for any $X\in V^{1,0:J}$,
$Y\in V^{0,1:J}$ and $\a\in V^*$ we have
$$\iota_{\a}\bigl(b(X,Y)\bigr)=\a(X)Y+\a(Y)X+\a(KX)KY+\a(KY)KX\;.$$
\end{prop}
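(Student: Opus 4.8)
The plan is to exploit the ${\rm SO}(3)$-invariance of $b$ to pass to a convenient frame, and then read off all three assertions from a single eigenspace computation in the complexification. Fix $J\in Z_V$ and (for (iii)) the prescribed $K$ orthogonal to it; set $L=JK$, which again lies in $Z_V$ and makes $(J,K,L)$ satisfy the quaternionic identities. Since $\beta$, and hence $b$, depends only on the linear quaternionic structure and not on the admissible $\r$ representing it, I may choose the admissible linear hypercomplex structure with $E_1=J$, $E_2=K$, $E_3=L$. Complexifying and writing $V^{\C}=V^{1,0;J}\oplus V^{0,1;J}$, I record the only bookkeeping needed: $K$ anticommutes with $J$ and so interchanges the two eigenspaces, while $L=JK$ acts on $V^{1,0;J}$ by $-{\rm i}K$ and on $V^{0,1;J}$ by ${\rm i}K$.

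I would prove (iii) and (ii) together, since they issue from the same calculation. Write $U\odot W=\tfrac12(U\otimes W+W\otimes U)$, so that \eqref{e:projector} reads $b(X,Y)=\sum_{i=0}^3E_iX\odot E_iY$. For $X\in V^{1,0;J}$ and $Y\in V^{0,1;J}$ the $E_0$- and $E_1$-terms each equal $X\odot Y$ (the factors ${\rm i}$ and $-{\rm i}$ cancelling), while the $E_2$- and $E_3$-terms each equal $KX\odot KY$; hence $b(X,Y)=2\,X\odot Y+2\,KX\odot KY$, and a symmetric contraction with $\alpha$ (absorbing the factors $2$) gives exactly the formula in (iii). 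Running the identical computation with both arguments in $V^{0,1;J}$ makes the $E_0$/$E_1$ pair cancel against each other and the $E_2$/$E_3$ pair likewise, so in fact $b$ vanishes on $V^{0,1;J}\otimes V^{0,1;J}$ (and, symmetrically, on $V^{1,0;J}\otimes V^{1,0;J}$); in particular (ii) holds, even before contraction.

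For (i) let $\mathcal H$ denote the Hermitian forms, i.e.\ the common fixed points of all $J\otimes J$, $J\in Z_V$. The inclusion $b(V\otimes V)\subseteq\mathcal H$ follows at once from the vanishing just established: decomposing arbitrary arguments into their $(1,0)$- and $(0,1)$-parts, only the mixed terms survive, and the mixed symmetric part $V^{1,0;J}\odot V^{0,1;J}$ is precisely the $(+1)$-eigenspace of $J\otimes J$ on $(V\odot V)^{\C}$; as $J\in Z_V$ is arbitrary, $b(X,Y)$ is Hermitian. For the reverse inclusion I would show that $b$ acts on $\mathcal H$ as $4\,{\rm Id}$. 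Indeed, any $S\in\mathcal H$ can be written, for the fixed $J$, as $\sum_aX_a\odot Y_a$ with $X_a\in V^{1,0;J}$, $Y_a\in V^{0,1;J}$, and the formula for $b(X_a,Y_a)$ gives $b(S)=2S+2\,(K\otimes K)S$; but $(K\otimes K)S=S$ because $K\in Z_V$ and $S$ is Hermitian, whence $b(S)=4S$. Thus $\mathcal H\subseteq b(V\otimes V)$, and equality holds.

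The computations are routine once the frame is fixed; the only genuine choices are the reduction to the adapted admissible structure (legitimate exactly because $b$ is ${\rm SO}(3)$-invariant) and the sign bookkeeping for the action of $K$ and $L$ on the two eigenspaces, where a slip would corrupt all three parts simultaneously. The step I expect to need the most care is the reverse inclusion in (i): rather than comparing the dimensions of $b(V\otimes V)$ and $\mathcal H$, the efficient route is to recognise $\tfrac14 b$ as the orthogonal projection onto $\mathcal H$, for which the identity $(K\otimes K)S=S$ for Hermitian $S$ is the decisive input.
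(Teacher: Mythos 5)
Your proof is correct and follows essentially the same route as the paper: both fix an adapted admissible basis and exploit the pairwise cancellation of the $E_0/E_1$ and $E_2/E_3$ terms of \eqref{e:projector} on the $J$-eigenspaces. The only substantive addition is your argument for the reverse inclusion in (i) via the identity $b=4\,{\rm Id}$ on the Hermitian forms (using $(K\otimes K)S=S$), which supplies the detail the paper leaves implicit when it says (i) ``follows from relation \eqref{e:projector}''; your sign bookkeeping for $K$ and $L=JK$ on the two eigenspaces is accurate throughout.
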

\begin{proof}
Assertion (i) follows, for example, from relation \eqref{e:projector}\,.\\
\indent
To prove (ii)\,,
let $\a\in V^*$ and let $I\in Z_V$ be included in any admissible hypercomplex basis
$(I,J,K)$\,. Then
\begin{equation*}
\begin{split}
2\,i_\a(b(X,Y))=&\,\a(X)Y+\a(Y)X+\a(IX)IY+\a(IY)IX\\
             &+\a(JX)JY+\a(JY)JX+\a(KX)KY+\a(KY)KX\;.
\end{split}
\end{equation*}
\indent
If $X,Y \in V^{0,1;I}$ (that is, $IX=-{\bf i}X$, $IY=-{\bf i}Y$) we have
\begin{equation*}
\begin{split}
\a(IX)IY+\a(IY)IX&=-\bigl(\a(X)Y+\a(Y)X\bigr)\;,\\
\a(KX)KY+\a(KY)KX&=\a(IJX)IJY+\a(IJY)IJX\\
                 &=-\bigl(\a(JX)JY+\a(JY)JX\bigr)\;.
\end{split}
\end{equation*}
\indent
The proof of (ii) follows.\\
\indent
Assertion (iii) can be proved similarly.
\end{proof}

\begin{rem}
A result, similar to Proposition \ref{prop:projector}\,, can be straightforwardly established for $\b$\,.
\end{rem}

\section{Quaternionic manifolds and maps} \label{section:2}

\indent
In this section we review some basic facts on (almost) quaternionic manifolds (see \cite{AleMar-Annali96}\,) and we
introduce the notion of quaternionic map.\\
\indent
Unless otherwise stated, all the manifolds and maps are assumed smooth.

\begin{defn}
A \emph{(fibre) bundle of associative algebras} is a vector bundle whose typical fibre
is a (finite-dimensional) associative algebra $A$ and whose structural group is the group of automorphisms
of $A$\,.\\
\indent
Let $E$ and $F$ be bundles of associative algebras. A morphism of vector bundles $\r:E\to F$ is called
a \emph{morphism of bundles of associative algebras} if $\r$ restricted to each fibre is a morphism of
associative algebras.
\end{defn}

\indent
Next, we recall the definitions of almost quaternionic manifolds and almost hypercomplex manifolds.

\begin{defn}[\,\cite{Bon}\,]
An \emph{almost quaternionic structure} on a manifold $M$ is a pair $(E,\r)$ where $E$
is a bundle of associative algebras, over $M$, with typical fibre $\Hq$ and $\r:E\to{\rm End}(TM)$
is a morphism of bundles of associative algebras. An \emph{almost quaternionic manifold}
is a manifold endowed with an almost quaternionic structure.\\
\indent
An \emph{almost hypercomplex structure} on a manifold $M$ is an almost quaternionic
structure $(E,\r)$ for which $E=M\times\Hq$. An \emph{almost hypercomplex manifold}
is a manifold endowed with an almost hypercomplex structure.
\end{defn}

\indent
It is well-known (see \cite{AleMar-Annali96}\,) that there are other ways to define the
almost quaternionic and hypercomplex manifolds.

\begin{prop}
An almost quaternionic structure on a manifold $M$ corresponds to a reduction of the frame bundle of $M$
to ${\rm Sp}(1)\cdot{\rm GL}(m,\Hq)$ (equivalently, to an ${\rm Sp}(1)\cdot{\rm GL}(m,\Hq)$-structure).\\
\indent
An almost hypercomplex structure on a manifold $M$ corresponds to a reduction of the frame bundle of $M$
to ${\rm GL}(m,\Hq)$\,.
\end{prop}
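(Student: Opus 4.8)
The plan is to set up a natural, pointwise-then-global bijection between the algebraic data $(E,\r)$ and a reduced frame bundle. First I would observe that, for each $x\in M$, the fibre $E_x$ is an associative algebra isomorphic to $\Hq$ and $\r_x\colon E_x\to{\rm End}(T_xM)$ is a morphism of associative algebras; choosing any algebra isomorphism $\Hq\to E_x$ and composing it with $\r_x$ produces an admissible linear hypercomplex structure on $T_xM$, and the induced linear quaternionic structure is independent of this choice, since any two such isomorphisms differ by an automorphism of $\Hq$. Thus $(E,\r)$ endows each tangent space with a linear quaternionic structure, and by Proposition \ref{prop:qlinearmaps2}(i) each $T_xM$ is quaternionic linearly isomorphic to $\Hq^{\!m}$, where $4m=\dim M$.

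Next I would define the subset $P$ of the frame bundle of $M$ consisting of those linear frames $u\colon\Hq^{\!m}\to T_xM$ that are quaternionic linear isomorphisms, with $\Hq^{\!m}$ carrying its natural linear quaternionic structure. Any two frames over the same point differ by a quaternionic linear automorphism of $\Hq^{\!m}$, so by Corollary \ref{cor:qlineargroup} the fibres of $P$ are exactly the orbits of ${\rm Sp}(1)\cdot{\rm GL}(m,\Hq)$; smoothness and local triviality of $P$ follow by starting from any local smooth trivialisation of $E$ together with an adapted local frame of $TM$, which exhibits $P$ as an ${\rm Sp}(1)\cdot{\rm GL}(m,\Hq)$-subbundle, that is, a reduction.

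Conversely, given a reduction $P$ to $G={\rm Sp}(1)\cdot{\rm GL}(m,\Hq)$, I would recover $(E,\r)$ as an associated bundle. The group $G$ acts on $\Hq$ through the projection ${\rm Sp}(1)\to{\rm SO}(3)={\rm Aut}(\Hq)$ by conjugation, the ${\rm GL}(m,\Hq)$-factor acting trivially, so $E=P\times_G\Hq$ is a bundle of associative algebras with fibre $\Hq$. The natural morphism $\r_0\colon\Hq\to{\rm End}(\Hq^{\!m})$, $\r_0(q)X=qX$, is equivariant in the required sense: a frame change $t(X)=aXA$ (as in Proposition \ref{prop:qlinearmaps2}(ii)) conjugates $\r_0(q)$ into $\r_0(aqa^{-1})$, and this is precisely what makes the induced $\r\colon E\to{\rm End}(TM)$ well-defined on the associated bundles. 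One then checks that these two constructions are mutually inverse. The almost hypercomplex case is identical, with $E=M\times\Hq$ corresponding to the further reduction in which the ${\rm Sp}(1)$-factor is trivial, so that the relevant frames are the $\Hq$-linear ones and the structure group is ${\rm GL}(m,\Hq)$.

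The step I expect to be the main obstacle is the equivariance bookkeeping: verifying that the conjugation relation $t\circ\r_0(q)\circ t^{-1}=\r_0(aqa^{-1})$ for $t(X)=aXA$ matches exactly the ${\rm SO}(3)$-action through which $G$ acts on the fibre $\Hq$ of $E$, so that the associated-bundle definition of $\r$ is consistent and the two passages are genuinely inverse to one another. Everything else—the pointwise identification of quaternionic structures and the identification of the structure group—is immediate from Proposition \ref{prop:qlinearmaps2} and Corollary \ref{cor:qlineargroup}.
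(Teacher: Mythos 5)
Your argument is correct and follows essentially the same route as the paper: both directions rest on the pointwise quaternionic structure on $T_xM$, on Corollary \ref{cor:qlineargroup} to identify the structure group, and on the morphism ${\rm Sp}(1)\cdot{\rm GL}(m,\Hq)\to{\rm SO}(3)$, $a\cdot A\mapsto{\rm Ad}\,a$, for the converse. The only cosmetic differences are that the paper phrases the reduction via the cocycle of transition functions of adapted trivializations rather than via the subbundle of adapted frames, and recovers $E$ as the concrete subbundle $\widetilde{Q}\subseteq{\rm End}(TM)$ generated by the associated rank-three bundle $Q$ and ${\rm Id}_{TM}$ rather than as the abstract associated bundle $P\times_{G}\Hq$; these are equivalent packagings of the same construction.
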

\begin{proof}
Let $M$ be a manifold endowed with an almost quaternionic structure $(E,\r)$\,. At each $x\in M$,
the morphism $\r_x:E_x\to{\rm End}(T_xM)$ determines a structure of quaternionic vector space on $T_xM$.\\
\indent
Let $U$ be an open set of $M$ over which $E$ is trivial. Then, by passing to an open subset, if necessary,
we can construct a local trivialization $h_U:U\times\Hq^{\!m}\to TM|_U$ of $TM$
which induces quaternionic linear isomorphisms on each fibre.\\
\indent
If $h_U$ and $h_V$ are two such local trivializations, with $U\cap V\neq\emptyset$\,,
then, by Corollary \ref{cor:qlineargroup}\,, we have that $\bigl((h_V)^{-1}\circ h_U\bigr)(x,q)=(x,a(x)q)$
for some map $a:U\cap V\to{\rm Sp}(1)\cdot{\rm GL}(m,\Hq)$\,, $(x\in U\cap V,\;q\in\Hq^{\!m})$\,.\\
\indent
Conversely, if the frame bundle of $M$ admits a reduction to ${\rm Sp}(1)\cdot{\rm GL}(m,\Hq)$ then the
morphism of Lie groups ${\rm Sp}(1)\cdot{\rm GL}(m,\Hq)\to{\rm SO}(3)$\,, $a\cdot A\mapsto{\rm Ad}\,a$
determines an oriented Riemannian vector bundle $Q$ of rank three and an injective morphism of vector bundles
$Q\hookrightarrow{\rm End}(TM)$ with the property that any positive local orthonormal frame
of $Q$ satisfies the quaternionic identities. Let $\widetilde{Q}$ be generated by $Q$ and ${\rm Id}_{TM}$.
Then $\widetilde{Q}\hookrightarrow{\rm End}(TM)$ is a subbundle of associative algebras and its typical fibre
is $\Hq$.\\
\indent
The proof for almost hypercomplex manifolds is similar.
\end{proof}

\indent
Let $M$ be a manifold endowed with an almost quaternionic structure $(E,\r)$\,. Then, as each fibre of $E$ is
an associative algebra isomorphic to $\Hq$, there exists an oriented Riemannian vector subbundle of rank three
${\rm Im}E\subseteq E$ with the property that any positive local orthonormal frame of it
satisfies the quaternionic identities. Let $Q_M=\r({\rm Im}E)$ and $\widetilde{Q}_M=\r(E)$\,. Then
$\widetilde{Q}_M\subseteq{\rm End}(TM)$ is a subbundle of associative algebras and its typical fibre
is $\Hq$. Also, $Q_M$ is an oriented Riemannian vector bundle of rank three with the property that any
positive local orthonormal frame of it satisfies the quaternionic identities; denote by $Z_M$ the
sphere bundle of $Q_M$\,.\\
\indent
Note that, any almost quaternionic manifold $M$, $(\dim M=4m)$\,, is oriented; at each $x\in M$,
the orientation of $T_xM$ is given by any $J\in(Z_M)_x$\,. Denote by $L$ the line bundle of $M$;
that is, the line bundle over $M$ associated to the frame bundle of positive frames through the morphism
of Lie groups ${\rm GL}(4m,\R)_0\to(0,\infty)$\,, $a\mapsto(\det a)^{1/(4m)}$\,; see \cite{LouPan-II}\,.
(Sometimes, $(L^*)^{4m}$ is called `the bundle of densities' of $M$ whilst $L$ is called `the bundle of
densities of weight $1$' or, even, `the weight bundle' of $M$.)\\
\indent
Also, as ${\rm Sp}(1)\cdot{\rm GL}(1,\Hq)$ is equal to the connected component of the identity of
${\rm CO}(4)$\,, a four-dimensional almost quaternionic manifold is just an oriented conformal manifold.

\begin{defn} \label{defn:qm}
Let $\phi:M\to N$ be a map between almost quaternionic manifolds and let $\Phi:Z_M\to Z_N$ be such that
$\p_N\circ\Phi=\phi\circ\p_M$, where $\p_M:Z_M\to M$ and $\p_N:Z_N\to N$ are the projections.\\
\indent
Then $\phi$ is a \emph{quaternionic map, with respect to $\Phi$}, if
$\dif\!\phi_{\p_M(J)}\circ J=\Phi(J)\circ\dif\!\phi_{\p_M(J)}$
for any $J\in Z_M$\,.\\
\indent
A \emph{quaternionic immersion/submersion/diffeomorphism} is a quaternionic map which is an
immersion/submersion/diffeomorphism.\\
\indent
An injective quaternionic immersion is called an \emph{almost quaternionic submanifold}.
\end{defn}

\begin{rem}
1) For immersions (and, in particular, diffeomorphisms) our definition of quaternionic map
particularizes to give notions already in use (see, for example, \cite{AleMar-Lincei93}\,).\\
\indent
2) Let $M$, $N$ and $P$ be almost quaternionic manifolds, and let $\phi:M\to N$ and $\psi:N\to P$ be quaternionic
maps, with respect to some maps $\Phi:Z_M\to Z_N$ and $\varPsi:Z_N\to Z_P$\,, respectively. Then, obviously,
$\psi\circ\phi$ is quaternionic, with respect to $\varPsi\circ\Phi$\,.
\end{rem}

\indent
The following result follows quickly from Proposition \ref{prop:qlinearmaps1}\,.

\begin{prop} \label{prop:firstpropr}
Let $M$ and $N$ be almost quaternionic manifolds and let $\phi:M\to N$ be a map of rank at least one.\\
\indent
{\rm (i)} If $\phi$ is quaternionic, with respect to $\Phi_1,\,\Phi_2:Z_M\to Z_N$\,, then $\Phi_1=\Phi_2$\,.\\
\indent
{\rm (ii)} If $\phi$ is quaternionic, with respect to $\Phi:Z_M\to Z_N$\,, then $\Phi$
induces an isomorphism of\/ ${\rm SO}(3)$-bundles\/ $Q_M=\phi^*(Q_N)$\,.
\end{prop}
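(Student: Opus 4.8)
The plan is to deduce Proposition \ref{prop:firstpropr} from the pointwise statement, Proposition \ref{prop:qlinearmaps1}, by working fibrewise over $M$. The key observation is that the quaternionic-map condition $\dif\!\phi_x\circ J=\Phi(J)\circ\dif\!\phi_x$ (for $J\in(Z_M)_x$) is exactly the statement that, at each point $x$ where $\dif\!\phi_x\neq0$, the differential $t:=\dif\!\phi_x:T_xM\to T_{\phi(x)}N$ is a quaternionic linear map (in the sense defined before Proposition \ref{prop:qlinearmaps1}) with respect to the restriction of $\Phi$ to the fibre $(Z_M)_x\to(Z_N)_{\phi(x)}$. Since $\phi$ has rank at least one, $\dif\!\phi_x\neq0$ on an open dense subset of $M$, and in fact $\dif\!\phi_x\neq0$ at every point where the rank condition holds; I would first note that the rank-at-least-one hypothesis guarantees $\dif\!\phi_x\neq0$ for all $x$, so the pointwise machinery applies everywhere.

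For assertion (i), I would fix an arbitrary point $x\in M$ and apply Proposition \ref{prop:qlinearmaps1}(i) to the nonzero linear map $t=\dif\!\phi_x$, with the two maps $T_1=\Phi_1|_{(Z_M)_x}$ and $T_2=\Phi_2|_{(Z_M)_x}$ from $(Z_M)_x=Z_{T_xM}$ to $(Z_N)_{\phi(x)}=Z_{T_{\phi(x)}N}$. The proposition gives $T_1=T_2$, i.e.\ $\Phi_1(J)=\Phi_2(J)$ for every $J\in(Z_M)_x$; as $x$ was arbitrary, $\Phi_1=\Phi_2$ on all of $Z_M$.

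For assertion (ii), I would again fix $x\in M$ and apply Proposition \ref{prop:qlinearmaps1}(ii) to $t=\dif\!\phi_x$: the restriction $T=\Phi|_{(Z_M)_x}$ extends uniquely to an orientation-preserving linear isometry $(Q_M)_x\to(Q_N)_{\phi(x)}$. Letting $x$ vary, this produces a fibrewise map $(Q_M)_x\to(Q_N)_{\phi(x)}=(\phi^*Q_N)_x$ which is, in each fibre, an orientation-preserving isometry, hence an isomorphism of three-dimensional oriented Euclidean vector spaces; since such an isometry is precisely an element of ${\rm SO}(3)$ acting between the fibres, this is exactly an isomorphism of ${\rm SO}(3)$-bundles $Q_M\cong\phi^*(Q_N)$.

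The one point requiring a little care — the main (mild) obstacle — is smoothness of the resulting bundle isomorphism. Proposition \ref{prop:qlinearmaps1} is a purely algebraic, pointwise statement, so it produces the isometry fibre by fibre with no a priori regularity in $x$. To upgrade to an isomorphism of ${\rm SO}(3)$-bundles I would observe that $\Phi$ is smooth by hypothesis, that the extension of $T$ to $Q_V\to Q_W$ in Proposition \ref{prop:qlinearmaps1}(ii) is given by the explicit linear formula $T(aI+bJ+cK)=a\,T(I)+b\,T(J)+c\,T(K)$ appearing in its proof, and that $Q_M$ is locally framed by smooth sections $(I,J,K)$ satisfying the quaternionic identities; composing $\Phi$ with such a local frame shows the extended map depends smoothly on $x$, so the fibrewise isometries assemble into a smooth (hence, being a fibrewise isometry, an ${\rm SO}(3)$-equivariant) bundle isomorphism. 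This last verification is routine and I would state it only briefly.
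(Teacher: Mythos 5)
Your proposal is correct and matches the paper's approach: the paper gives no written proof, stating only that the proposition ``follows quickly from Proposition \ref{prop:qlinearmaps1}'', which is exactly the fibrewise application of that pointwise result that you carry out (including the correct reading of ``rank at least one'' as $\dif\!\phi_x\neq0$ at every point). Your extra remark on smoothness of the induced bundle isomorphism, via the explicit linear extension formula and local frames of $Q_M$, is a reasonable addition that the paper leaves implicit.
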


\indent
Let $M$ be an almost quaternionic manifold, $\dim M=4m$\,. An \emph{almost quaternionic connection} on $M$ is
a connection $\nabla$ which induces a connection on $Q_M$ (that is, if $J$ is a section of $Q_M$ and $X$ is
a vector field on $M$ then $\nabla_XJ$ is a section of $Q_M$); equivalently, $\nabla$ induces a connection on
the reduction to ${\rm Sp}(1)\cdot{\rm GL}(m,\Hq)$ of the frame bundle of $M$, corresponding to the almost
quaternionic structure. If $m\geq2$\,, a \emph{quaternionic connection} on $M$ is a torsion-free almost
quaternionic connection. If $m=1$\,, a \emph{quaternionic connection} on $M$ is an anti-self-dual
Weyl connection.\\

\begin{defn}[\,\cite{Sal-dg_qm}\,; cf.\ \cite{Opr-q77}\,]
A \emph{quaternionic manifold} is an almost quaternionic manifold endowed with a quaternionic connection.
\end{defn}

\indent
The set of quaternionic connections on a quaternionic manifold is well-understood.

\begin{prop}[\,\cite{Opr-q84}\,; see \cite{AleMar-Annali96}\,] \label{prop:qconnections}
Let $M$ be a quaternionic manifold, $\dim M=4m$\,. The set of quaternionic connections on $M$
is an affine space, over the vector space of \mbox{$1$-forms} on $M$, isomorphic to the affine
space of connections on $L$\,: if\/ $\a$ is the difference between the connections induced on
$L^{\frac{2m}{m+1}}$ by two quaternionic connections $\nabla^2$ and\/ $\nabla^1$ on $M$ then
$$\nabla^2_XY=\nabla^1_XY+\a(X)Y+\a(Y)X-\iota_{\a}\bigl(b(X,Y)\bigr)\;,$$
for any vector fields $X$ and\/ $Y$ on $M$.
\end{prop}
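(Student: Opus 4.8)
The plan is to realise the quaternionic connections as an affine space whose associated vector space of differences is identified, pointwise and linearly, with the $1$-forms, and then to exhibit that identification explicitly. First I would record that the difference $A=\nabla^2-\nabla^1$ of two connections is a tensor, a section of $T^*M\otimes\mathrm{End}(TM)$; that the torsion-free hypothesis forces $A$ to be symmetric, i.e.\ a section of $\odot^2T^*M\otimes TM$; and that, since both connections induce connections on $Q_M$, for each $X$ the endomorphism $A_X$ must normalise $Q_M$, that is $[A_X,J]$ is a section of $Q_M$ for every section $J$ of $Q_M$ (the difference of the induced connections on $\mathrm{End}(TM)$ sends $\Psi$ to $[A_{\cdot},\Psi]$). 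Conversely, adding any such symmetric, $Q_M$-normalising tensor to a quaternionic connection yields another quaternionic connection. Thus the quaternionic connections form an affine space over the space $\mathcal{A}$ of sections of the bundle of symmetric tensors $A$ with each $A_X$ normalising $Q_M$, and everything reduces to the pointwise identification of $\mathcal{A}$ with $T^*M$.

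Next I would introduce the candidate linear map $\alpha\mapsto A^{\alpha}$, where $A^{\alpha}(X,Y)=\alpha(X)Y+\alpha(Y)X-\iota_{\alpha}\bigl(b(X,Y)\bigr)$, and check that $A^{\alpha}\in\mathcal{A}$. Symmetry is immediate, since $b(X,Y)$ is symmetric. The essential point is that $A^{\alpha}_X$ normalises $Q_M$: writing $A^{\alpha}_X=\alpha(X)\,\mathrm{Id}+B_X$ with $B_X(Y)=\alpha(Y)X-\iota_{\alpha}\bigl(b(X,Y)\bigr)$, the scalar part is central and drops out, so it remains to prove $[B_X,J]$ is $Q_M$-valued for each section $J$ of $Z_M$. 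I would verify this by fixing $J$, decomposing $TM=V^{1,0;J}\oplus V^{0,1;J}$ into the $\pm\mathrm{i}$-eigenspaces, and computing $[B_X,J]Y$ on each summand; the contraction identities of Proposition \ref{prop:projector}(ii),(iii), which evaluate $\iota_{\alpha}\bigl(b(X,Y)\bigr)$ precisely on these eigenspaces, are exactly what is needed to see that the result is again $Q_M$-valued. I expect this normaliser verification to be the main computational obstacle.

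I would then compute the trace of the endomorphism $A^{\alpha}_X$. Using \eqref{e:projector} together with $E_0=\mathrm{Id}$, $E_i^2=-\mathrm{Id}$ and $\trace(E_i)=0$ for $i=1,2,3$, a short calculation gives $\trace\bigl(A^{\alpha}_X\bigr)=2(m+1)\,\alpha(X)$. This already shows that $\alpha\mapsto A^{\alpha}$ is injective. Moreover, a connection on $TM$ induces on $L$ (associated through $a\mapsto(\det a)^{1/(4m)}$) a connection whose difference, between the two choices, is $\tfrac{1}{4m}\trace(A_{\cdot})$; hence on $L^{\frac{2m}{m+1}}$ the difference is $\tfrac{1}{2(m+1)}\trace(A_{\cdot})$. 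Therefore, when $\alpha$ denotes the $1$-form measuring the difference of the induced connections on $L^{\frac{2m}{m+1}}$, we have $\trace(A_{\cdot})=2(m+1)\alpha$, so that $A$ and $A^{\alpha}$ share the same trace.

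Finally I would conclude that $\alpha\mapsto A^{\alpha}$ is an isomorphism from $T^*M$ onto $\mathcal{A}$. Injectivity is already in hand; for surjectivity it suffices to know that $\dim\mathcal{A}=4m$ at each point, equivalently that the only trace-free element of $\mathcal{A}$ is zero. This is the vanishing of the trace-free first prolongation of the Lie algebra $\mathfrak{sp}(1)\cdot\mathfrak{gl}(m,\Hq)$ of the structure group of Corollary \ref{cor:qlineargroup} (for $m\geq2$; the four-dimensional conformal, anti-self-dual Weyl, case being analogous), a classical fact which I would quote from the cited references. Granting it, for two quaternionic connections with difference $A$ and associated $1$-form $\alpha$, the tensor $A-A^{\alpha}$ lies in $\mathcal{A}$ and is trace-free by the previous step, hence vanishes; this gives $A=A^{\alpha}$, which is exactly the stated formula. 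The principal difficulties are thus the normaliser computation and the prolongation input.
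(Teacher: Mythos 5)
The paper offers no proof of this proposition --- it is quoted from \cite{Opr-q84} and \cite{AleMar-Annali96} --- so there is nothing internal to compare against; judged on its own, your argument is correct and is essentially the standard first-prolongation proof that those references give. The individual steps check out: the difference $A=\nabla^2-\nabla^1$ of two quaternionic connections is indeed a symmetric tensor with each $A_X$ in the normaliser $\mathfrak{sp}(1)\oplus\mathfrak{gl}(m,\Hq)$ of $Q_M$; the tensor $A^{\a}$ does lie in this space (your route via Proposition \ref{prop:projector}(ii),(iii) works, though it is slightly cleaner to split $\iota_{\a}\bigl(b(X,Y)\bigr)$ using \eqref{e:projector} into a multiple of ${\rm Id}$, a $Q_M$-valued part $-\tfrac12\sum_{i=1}^{3}\a(E_iX)E_i$, and a part commuting with each $E_j$); and the trace computation $\trace\bigl(A^{\a}_X\bigr)=\bigl(4m+1-(2m-1)\bigr)\a(X)=2(m+1)\a(X)$ is right and explains the exponent $\tfrac{2m}{m+1}$ in the statement. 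The one genuine external input is the vanishing of the trace-free part of the first prolongation of $\mathfrak{sp}(1)\oplus\mathfrak{gl}(m,\Hq)$, which you quote rather than prove; since this is precisely the classical computation contained in the cited references, that is a legitimate appeal, but be aware it is the real content of the proposition rather than a side fact. Two small glosses worth making explicit: for $m=1$ a quaternionic connection is by definition an \emph{anti-self-dual} Weyl connection, so besides the prolongation of $\mathfrak{co}(4)$ you also need that anti-self-duality is a property of the conformal structure alone (so all Weyl connections share it once one does --- the paper asserts this in Remark \ref{rem:twist_1-int}); and surjectivity of $\a\mapsto A^{\a}$ onto the prolongation, which you deduce from the dimension count, is what guarantees that \emph{every} $1$-form actually produces a new quaternionic connection, completing the affine-space claim.
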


\indent
Next, we recall the natural `almost twistorial structures' of a quaternionic manifold
(see \cite{LouPan-II} for the general notion of `almost twistorial structure').

\begin{exm}[\,\cite{Sal-dg_qm}\,] \label{exm:qtwiststr}
Let $M$ be a quaternionic manifold. The quaternionic connection of $M$ induces a connection $\H\subseteq TZ_M$
on $Z_M$\,. Let $\H^{1,0}$ be the complex subbundle of $\H^{\C}$ such that $\dif\!\p_M\bigl(\H^{1,0}_J\bigr)$
is the eigenspace corresponding to ${\rm i}$ of $J\in{\rm End}(T_{\p_M(J)}M)$, for any $J\in Z_M$\,,
where $\p_M:Z_M\to M$ is the projection.\\
\indent
Let $\J_M$ be the almost complex structure on $Z_M$ whose eigenbundle corresponding to ${\rm i}$ is equal
to $\H^{1,0}\oplus({\rm ker}\dif\!\p_M)^{1,0}$.\\
\indent
We have that $\J_M$ is integrable (this can be proved by using \cite[Theorem 1.1]{Pan-tm}\,).
Furthermore, $\J_M$ does not depend of the quaternionic connection on $M$ (this can be proved by using
Propositions \ref{prop:projector}(ii) and \ref{prop:qconnections}\,).\\
\indent
We call $\t_M=(Z_M,M,\p_M,\J_M)$ \emph{the twistorial structure of $M$}.\\
\indent
From the integrability of $\t_M$ it follows that there exists a unique real-analytic structure on $M$
with respect to which the following conditions are satisfied:\\
\indent
\quad{}a) The almost quaternionic structure of $M$ is real-analytic;\\
\indent
\quad{}b) Locally, there exist real-analytic quaternionic connections on $M$.\\
\indent
Note that, the given quaternionic connection on $M$ is not necessarily real-analytic.
\end{exm}

\begin{rem}[cf.\ \cite{AleMarPon-99}\,] \label{rem:twist_1-int}
Let $M$ be an almost quaternionic manifold and let $\nabla$ be an almost quaternionic connection on $M$.
Then, similarly to Example \ref{exm:qtwiststr}\,, we construct an almost complex structure $\J^{\nabla}$
on $Z_M$ and an almost twistorial structure $\t^{\nabla}=(Z_M,M,\p_M,\J^{\nabla})$\,.\\
\indent
1) Let $J$ be a (local) admissible almost complex structure on $M$ and let $s^J$ be the section of $Z_M$
corresponding to $J$. Then any two of the following assertions imply the third:\\
\indent
\quad(i) $J$ is integrable;\\
\indent
\quad(ii) $s^J:(M,J)\to(Z_M,\J^{\nabla})$ is holomorphic;\\
\indent
\quad(iii) $T^{\nabla}\bigl(\Lambda^2(T^{0,1;J}M)\bigr)\subseteq T^{0,1;J}M$, where $T^{\nabla}$ is
the torsion of $\nabla$.\\
\indent
2) \emph{The almost twistorial structure $\t^{\nabla}$ is integrable if and only if $M$ admits a quaternionic
connection and $\t^{\nabla}=\t_M$\,}. Indeed, if $\J^{\nabla}$ is integrable then, locally, there exist many
admissible almost complex structures $J$ which satisfy assertion (ii)\,, above; moreover, as $\J^{\nabla}$ is
integrable, any such $J$ is integrable. If $\dim M=4$\,, it is fairly well-known that, then, any Weyl connection
is anti-self-dual, whilst, if $\dim M\geq8$ then, by \cite[Theorem 2.4]{AleMarPon-99}\,, $M$ admits a quaternionic
connection. Also, we have that $T^{1,0;\J^{\nabla}}\!Z_M$ is, pointwisely, generated by $({\rm ker}\dif\!\p)^{1,0}$
and the holomorphic tangent bundles to the images of the local sections of $Z_M$ corresponding to
admissible local complex structures on $M$. As this, also, holds for any quaternionic connection on $M$,
we obtain $\J^{\nabla}=\J_M$\,.
\end{rem}

\begin{exm}[cf.\ \cite{EelSal}\,] \label{exm:qtwiststr'}
With the same notations as in Example \ref{exm:qtwiststr'}\,, let $\J'_M$ the almost complex structure
on $Z_M$ whose eigenbundle corresponding to ${\rm i}$ is equal to $\H^{1,0}\oplus({\rm ker}\dif\!\p_M)^{0,1}$.\\
\indent
We have that $\J'_M$ is nonintegrable (that is, always not integrable).
Furthermore, from Propositions  \ref{prop:projector}(iii) and \ref{prop:qconnections} it follows
that $\J'_M$ determines the quaternionic connection on $M$.\\
\indent
We call $\t'_M=(Z_M,M,\p_M,\J'_M)$ \emph{the nonintegrable almost twistorial structure of $M$}.
\end{exm}

\indent
We end this section with a well-known fact which will be used later on; for the reader's convenience 
we also supply a proof. 

\begin{prop}[see \cite{AleMar-Report93}\,] \label{prop:qsubmanifold}
Any almost quaternionic submanifold $N$ of a quaternionic manifold $M$ is totally-geodesic
with respect to any quaternionic connection $\nabla$ on $M$. Moreover, $\nabla$ induces
a quaternionic connection on $N$.
\end{prop}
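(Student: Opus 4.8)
The plan is to prove both assertions by working pointwise and exploiting the quaternionic-linear structure on the tangent spaces. Let $N\hookrightarrow M$ be the given almost quaternionic submanifold, with inclusion $\iota$, and let $\nabla$ be a quaternionic connection on $M$. The crucial structural input is that, at each $x\in N$, the differential $\dif\!\iota_x:T_xN\to T_xM$ is a quaternionic linear injection; by Proposition \ref{prop:firstpropr}(ii) the map $\Phi$ witnessing the quaternionicity induces an isomorphism of ${\rm SO}(3)$-bundles $Q_N=\iota^*(Q_M)$, so $Q_N$ may be identified with the restriction $Q_M|_N$ acting on $T_xN\subseteq T_xM$. In particular, for each admissible local almost complex structure, $T_xN$ is invariant under the endomorphisms in $(Q_M)_x$, so $T_xN$ is a quaternionic subspace of $T_xM$.

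First I would establish total-geodesy. The second fundamental form $\mathrm{II}$ of $N$ (relative to $\nabla$) is the symmetric bilinear map $(X,Y)\mapsto \bigl(\nabla_XY\bigr)^{\perp}$, where the normal part is taken with respect to any complement of $TN$; since $\nabla$ is torsion-free, $\mathrm{II}$ is well-defined and symmetric. The key point is that $TN$ is $Q_M$-invariant: differentiating the relation $\nabla(J\,Y)=J\,\nabla Y+(\nabla J)Y$ for a local section $J$ of $Q_M$ that preserves $TN$, and using that $\nabla J$ is again a section of $Q_M$ (hence preserves $TN$), one finds that $\mathrm{II}(X,JY)=J\,\mathrm{II}(X,Y)$ for all $J\in (Q_M)_x$. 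Thus $\mathrm{II}$ is itself quaternionic (more precisely, $J$-linear in the second slot) and, by symmetry, in the first slot as well. Combined with the description of Hermitian symmetric $2$-forms in Proposition \ref{prop:projector}, a symmetric bilinear form that commutes with all three of $I,J,K$ must vanish on vectors of opposite type: testing $\mathrm{II}(X,Y)$ with $X\in V^{1,0;J}$, $Y\in V^{0,1;J}$ and using the identity of Proposition \ref{prop:projector}(iii) forces $\mathrm{II}=0$. Hence $N$ is totally geodesic.

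Once total-geodesy is in hand, the induced connection on $N$ is simply the restriction $\nabla^N_XY:=\nabla_XY$ for $X,Y$ tangent to $N$ (this lands back in $TN$ precisely because $\mathrm{II}=0$). It remains to check that $\nabla^N$ is a quaternionic connection on $N$. That $\nabla^N$ preserves $Q_N=Q_M|_N$ follows because $\nabla$ preserves $Q_M$ and $TN$ is $Q_M$-invariant, so for any section $J$ of $Q_N$ and $X$ tangent to $N$, $\nabla^N_XJ$ is again a section of $Q_N$; and $\nabla^N$ is torsion-free because $\nabla$ is and total-geodesy guarantees the restriction of the torsion to $TN$ equals the torsion of $\nabla^N$. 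In the case $\dim N=4$ one instead notes that a totally-geodesic submanifold of a self-dual space inherits the anti-self-dual Weyl structure, so $\nabla^N$ is again a quaternionic connection in the appropriate four-dimensional sense.

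The main obstacle I anticipate is the verification that $\mathrm{II}$ is genuinely quaternionic in both arguments, i.e.\ controlling the term $(\nabla J)Y$ when differentiating $J\,Y$. The point is that $\nabla J$ is a one-form valued in $Q_M$, hence its values preserve $TN$, so the normal projection annihilates $(\nabla_X J)Y$ whenever $Y\in TN$; this is exactly where the hypothesis that $\nabla$ is an \emph{almost quaternionic} connection (preserving $Q_M$) is used, and it is what makes $\mathrm{II}$ transform tensorially under $Q_M$ rather than picking up connection-dependent correction terms. Everything else is a routine consequence of the quaternionic-linear algebra already set up in Section \ref{section:1}.
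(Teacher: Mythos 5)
Your proof follows essentially the same route as the paper's: identify $TN$ as a $Q_M$-invariant subbundle, observe that $\nabla_XJ$ is again a section of $Q_M$ and hence preserves $TN$, deduce that the second fundamental form satisfies $\mathrm{II}(X,JY)=J\,\mathrm{II}(X,Y)$ for every $J\in(Q_M)|_N$, and conclude $\mathrm{II}=0$ by quaternionic linear algebra; the statement about the induced connection is then immediate. The one place where your write-up does not hold together is the final vanishing argument. Proposition \ref{prop:projector}(iii) computes $\iota_{\a}\bigl(b(X,Y)\bigr)$ for the projector $b$ onto Hermitian symmetric $2$-tensors; it says nothing about a symmetric bilinear map valued in the normal (quotient) bundle, so it is not the right tool here. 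Moreover, vanishing on pairs of opposite type with respect to a single $J$ does not by itself force $\mathrm{II}=0$: it leaves the components on $T^{1,0;J}N\odot T^{1,0;J}N$ and $T^{0,1;J}N\odot T^{0,1;J}N$ untouched. The correct (and very short) conclusion from what you have already established is the paper's: symmetry together with $J$-linearity in one slot gives $\mathrm{II}(JX,JY)=-\mathrm{II}(X,Y)$ for each $J$ in a quaternionic frame $(I,J,K)$ with $K=IJ$, whence $-\mathrm{II}(X,Y)=\mathrm{II}(KX,KY)=\mathrm{II}(IJX,IJY)=-\mathrm{II}(JX,JY)=\mathrm{II}(X,Y)$ and so $\mathrm{II}=0$. (Equivalently, in your complex-type language: for $X,Y\in T^{1,0;J}N$ one has $KY\in T^{0,1;J}N$, so $K\,\mathrm{II}(X,Y)=\mathrm{II}(X,KY)=0$ by the mixed-type vanishing, and $K$ is invertible.) With that substitution your argument is complete; the remarks about the choice of complement and the four-dimensional case are side issues that the paper's use of the quotient bundle $(TM|_N)/TN$ avoids.
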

\begin{proof}
We have $J\bigl(T_{\p_M(J)}N\bigr)\subseteq T_{\p_M(J)}N$, for any $J\in(Q_M)|_N$\,,
where $\p_M:Q_M\to M$ is the projection. Thus, the vector bundle $(TM|_N)/TN$ admits a unique reduction
to ${\rm Sp}(1)\cdot{\rm GL}(m-n,\Hq)$ such that the projection $\p:TM\to (TM|_N)/TN$ is
quaternionic linear on each fibre, where $\dim N=4n$ and $\dim M=4m$\,.\\
\indent
Let $B$ be the `second fundamental form' of $N\subseteq M$ with respect to $\nabla$; that is,
$B(X,Y)=\p(\nabla_XY)$, for any vector fields $X$, $Y$ on $N$. We have to prove that $B=0$\,.\\
\indent
Let $J$ be a section of $Z_M$ over some open set $U$ of $M$ which intersects $N$. Then,
for any $x\in U\cap N$ and $X\in T_xN$, we have that $\nabla_XJ\in(Q_M)_x$ and, consequently,
$(\nabla_XJ)(T_xN)\subseteq T_xN$. Therefore, $\nabla_X(JY)-J(\nabla_XY)$ is a vector field on $U\cap N$,
for any vector fields $X$, $Y$ on $U\cap N$.\\
\indent
Hence, for any $J\in(Z_M)|_N$ and $X,Y\in TN$, we have $B(X,JY)=JB(X,Y)$\,; as $B$ is symmetric,
this is equivalent to $B(JX,JY)=-B(X,Y)$\,. By applying this property to a positive orthonormal frame
of $Q_M$ the proof follows quickly.
\end{proof}

\begin{rem}
Proposition \ref{prop:qsubmanifold} motivates the use of the term 'quaternionic submanifold',
instead of 'almost quaternionic submanifold', when dealing with an ambient quaternionic manifold.
\end{rem}

\newpage

\section{Twistorial maps between quaternionic manifolds} \label{section:3}

\indent
The following definition is a particular case of \cite[Definition 4.1]{LouPan-II}\,.

\begin{defn}
Let $M$ and $N$ be quaternionic manifolds and let $\phi:M\to N$ be a map.
Suppose that there exists a map $\Phi:Z_M\to Z_N$ such that $\p_N\circ\Phi=\phi\circ\p_M$\,.\\
\indent
Denote by $\t_M=(Z_M,M,\p_M,\J_M)$ and $\t_N=(Z_N,N,\p_N,\J_N)$ the twistorial structures of $M$ and $N$,
respectively. Then $\phi:(M,\t_M)\to(N,\t_N)$ is a \emph{twistorial map, with respect to $\Phi$},
if $\Phi:(Z_M,\J_M)\to(Z_N,\J_N)$ is holomorphic.\\
\indent
Similarly, $\phi:(M,\t'_M)\to(N,\t'_N)$ is a \emph{twistorial map, with respect to $\Phi$},
if $\Phi:(Z_M,\J'_M)\to(Z_N,\J'_N)$ is holomorphic, where $\t'_M=(Z_M,M,\p_M,\J'_M)$ and
$\t'_N=(Z_N,N,\p_N,\J'_N)$ are the nonintegrable almost twistorial structures of $M$ and $N$,
respectively.
\end{defn}

\indent
Let $M$ be an almost quaternionic manifold and let $J\in Z_M$\,. We denote $T^{1,0;J}M$
and $T^{0,1:J}M$ the eigenspaces of $J$ corresponding to ${\rm i}$ and $-{\rm i}$\,, respectively.

\begin{prop} \label{prop:qtwist}
Let $M$ and $N$ be quaternionic manifolds; denote by\/ $\nabla^M$ and\/ $\nabla^N$ the quaternionic
connections of\/ $M$ and $N$, respectively.\\
\indent
Let $\phi:M\to N$ be a quaternionic map with respect to some map $\Phi:Z_M\to Z_N$\,;
suppose that $\phi$ is of rank at least one.\\
\indent
Then the following assertions are equivalent:\\
\indent
\quad{\rm (i)} $\phi:(M,\t_M)\to(N,\t_N)$ is twistorial, with respect to $\Phi$.\\
\indent
\quad{\rm (ii)} $(\nabla\!\dif\!\phi)(T^{0,1;J}_{\p_M(J)}M,T^{0,1;J}_{\p_M(J)}M)\subseteq T^{0,1;J}_{\phi(\p_M(J))}N$,
for any $J\in Z_M\bigl(=\phi^*(Z_N)\bigr)$\,, where $\nabla$ is the connection,
on ${\rm Hom}\bigl(TM,\phi^*(TN)\bigr)$\,, induced by\/ $\nabla^M$ and\/ $\nabla^N$.\\
\indent
\quad{\rm (iii)} $A(JX)J=J\circ(A(X)J)$, for any $J\in Z_M$ and $X\in T_{\p_M(J)}M$, where $A$ is the difference
between the connections, on $Q_M\bigl(=\phi^*(Q_N)\bigr)$\,, induced by\/ $\nabla^M$ and\/ $\nabla^N$.
\end{prop}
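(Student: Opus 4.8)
The plan is to split the holomorphicity of $\Phi$ into its action on the vertical and horizontal parts of the $(1,0)$-bundle of $\J_M$\,, to dispose of the vertical part using Proposition \ref{prop:firstpropr}(ii)\,, and then to read off the single remaining (horizontal) condition in the two forms (ii) and (iii)\,. Recall that the $(1,0)$-bundle of $\J_M$ is $\H^{1,0}\oplus({\rm ker}\dif\!\p_M)^{1,0}$, so (i) holds if and only if $\dif\!\Phi$ maps it into $\H^{1,0}\oplus({\rm ker}\dif\!\p_N)^{1,0}$. On each fibre, $\Phi$ is the restriction of the orientation-preserving linear isometry $Q_M=\phi^*(Q_N)$ of Proposition \ref{prop:firstpropr}(ii)\,; as the complex structure on each twistor sphere is induced by the quaternionic algebra structure of the fibre, it is preserved by the fibrewise algebra isomorphism determined by $\Phi$\,, so the vertical summand $({\rm ker}\dif\!\p_M)^{1,0}$ is automatically sent into $({\rm ker}\dif\!\p_N)^{1,0}$. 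Thus only the horizontal part requires work.

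The key step is to compute $\dif\!\Phi$ on a horizontal vector. Given $\widetilde{X}\in\H^{1,0}_J$ over $X\in T^{1,0;J}M$, write $\widetilde{X}=\dot{J}(0)$ for a curve $J(t)$ in $Z_M$ that is $\nabla^M$-parallel along a curve through $\p_M(J)$\,, and differentiate $\Phi\circ J$\,. A standard computation on sphere bundles with connection shows that the horizontal component of $\dif\!\Phi(\widetilde{X})$ is the horizontal lift of $\dif\!\phi(X)$\,, whilst its vertical component is the covariant derivative along $\phi$ of $\Phi\circ J$\,; choosing $J$ to be $\nabla^M$-parallel at $\p_M(J)$ and identifying $Q_M=\phi^*(Q_N)$ through $\Phi$\,, this vertical component is $A(X)J$ (orthogonal to $J$\,, since both induced connections are metric, hence a genuine vertical vector). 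Since $\phi$ is quaternionic, $\dif\!\phi(X)\in T^{1,0;\Phi(J)}N$\,, so the horizontal component already lies in $\H^{1,0}$; hence (i) is equivalent to the vertical component lying in $({\rm ker}\dif\!\p_N)^{1,0}$, which, under the identification by $\Phi$\,, reads
\begin{equation*}
J\,\bigl(A(X)J\bigr)={\rm i}\,A(X)J\quad\text{for all }J\in Z_M\text{ and }X\in T^{1,0;J}M\,. \tag{$*$}
\end{equation*}
Obtaining this identification of the vertical component with $A$ is, I expect, the main obstacle; the remaining steps are algebraic.

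Next I would read $(*)$ as (iii)\,. Substituting $X=\tfrac12(W-{\rm i}JW)$ for real $W$ and separating the part free of, and the part proportional to, the formal ${\rm i}$ yields $A(JW)J=J\bigl(A(W)J\bigr)$ together with its companion $J\,A(JW)J=-A(W)J$\,, the latter being the former evaluated at $JW$\,; hence $(*)\Leftrightarrow$ (iii)\,.

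Finally I would read $(*)$ as (ii)\,. Covariantly differentiating the identity $\dif\!\phi\circ J=\Phi(J)\circ\dif\!\phi$ and evaluating where $J$ is $\nabla^M$-parallel, one obtains, for $X,Y\in T^{0,1;J}M$\,, a formula expressing the $T^{1,0;\Phi(J)}N$-component of $(\nabla\!\dif\!\phi)(X,Y)$ as a nonzero multiple of $A(X)J$ acting (via $\Phi$) on $\dif\!\phi(Y)$\,. Now a nonzero element of $\bigl(\Phi(J)\bigr)^{\perp}$ annihilates $T^{0,1;\Phi(J)}N$ precisely when it lies in the $-{\rm i}$-eigenline of left multiplication by $\Phi(J)$\,, and otherwise maps $T^{0,1;\Phi(J)}N$ injectively into $T^{1,0;\Phi(J)}N$; since rank at least one forces $\dif\!\phi\bigl(T^{0,1;J}M\bigr)\neq0$\,, the vanishing (ii) of that component is equivalent to $J\,\bigl(A(X)J\bigr)=-{\rm i}\,A(X)J$ for all $X\in T^{0,1;J}M$\,. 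Conjugating this gives precisely $(*)$\,. Therefore (i)\,, (ii) and (iii) are equivalent.
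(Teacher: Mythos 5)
Your argument is correct, and for the equivalence (i)$\iff$(iii) it takes a genuinely different route from the paper's. The paper first reduces to the case where $\phi$ is a submersion (passing to a dense open set of constant rank and invoking Proposition \ref{prop:qsubmanifold} to make the local images totally geodesic quaternionic submanifolds), and then tests holomorphicity of $\Phi$ against local sections $J$ of $Z_M$ defined over local sections $S$ of $\phi$, via the criterion $\nabla^M_{J_0X}J=J_0\circ\nabla^M_XJ$ for $s^J$ to be holomorphic. You instead compute $\dif\!\Phi$ directly on the splitting $T^{1,0;\J_M}Z_M=\H^{1,0}\oplus({\rm ker}\dif\!\p_M)^{1,0}$: the vertical summand is disposed of by the fibrewise orientation-preserving isometry of Proposition \ref{prop:firstpropr}(ii) (which is fibrewise holomorphic), and on a horizontal lift the vertical component of $\dif\!\Phi$ is precisely $A(X)J$, so that (i) collapses to your eigenline condition $(*)$. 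This is cleaner in that it avoids the reduction to submersions altogether and makes the role of the difference tensor $A$ transparent; the paper's section argument, in exchange, reuses Proposition \ref{prop:qsubmanifold} and never needs to name a horizontal complement. For (ii) the two arguments essentially coincide: both differentiate $\dif\!\phi\circ J=\Phi(J)\circ\dif\!\phi$ to obtain $(\nabla_X\dif\!\phi)\circ J-J\circ(\nabla_X\dif\!\phi)=(A(X)J)\circ\dif\!\phi$, and your observation that a nonzero complexified element of $\Phi(J)^{\perp}\subseteq Q_N$ annihilates $\dif\!\phi\bigl(T^{0,1;J}M\bigr)\neq\{0\}$ only if it lies in the $(-{\rm i})$-eigenline of left multiplication by $\Phi(J)$ supplies exactly the step the paper compresses into ``the proof follows''. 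Your reduction of $(*)$ to (iii) by writing $X=\tfrac12(W-{\rm i}JW)$ and noting that the second resulting identity is the first evaluated at $JW$ is also correct. The one point to keep explicit is the convention for the complex structure on the fibres of $Z_M$ (left composition with $J$ on $J^{\perp}$, so that $I+{\rm i}K$ is the ${\rm i}$-eigenvector): with the opposite convention your computation would land on assertion (iii) of Proposition \ref{prop:qtwist'} instead; your choice agrees with the one implicit in the paper's criterion $\nabla^M_{J_0X}J=J_0\circ\nabla^M_XJ$, so there is no gap.
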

\begin{proof}
To prove (i)$\iff$(iii)\,, note that it is sufficient to consider $\phi$ of constant rank
(there exists a dense open subset of $M$ such that on each of its connected components $\phi$ has constant rank).
As then, locally, the image of $\phi$ is a quaternionic submanifold of $N$, Proposition \ref{prop:qsubmanifold}
implies that we can further assume $\phi$ submersive.\\
\indent
Let $J_0\in Z_M$ and let $x_0=\p_M(J_0)$\,. Also, let $S$ be (the image of) a local section of $\phi$\,,
containing $x_0$\,, such that $T_{x_0}S$ is preserved by $J_0$\,. Locally, we may extend $J_0$ to a section
$J$ of $Z_M$\,, over $S$\,. Then there exists a local section $\check{J}$ of $Z_N$ such that
$J=\phi^*(\check{J}\,)$ (equivalently, $\Phi\circ J=\check{J}\circ\phi$\,);
denote $\check{J}_0=\check{J}_{\phi(x_0)}$\,.\\
\indent
Now, the differential of $J$ at $x_0$ is a complex linear map from $\bigl(T_{x_0}S,J_0|_{T_{x_0}S}\bigr)$
to $\bigl(T_{J_0}Z_M,(\J_M)_{J_0}\bigr)$ if and only if $\nabla^M_{J_0X}J=J_0\circ\nabla^M_XJ$,
for any $X\in T_{x_0}S$\,. Similarly, the differential of $\check{J}$ at $\phi(x_0)$ is a
complex linear map from $\bigl(T_{\phi(x_0)}N,\check{J}_0\bigr)$ to
$\bigl(T_{\check{J}_0}Z_N,(\J_N)_{\check{J}_0}\bigr)$ if and only if
$\nabla^N_{\check{J}_0X}\check{J}=\check{J}_0\circ\nabla^N_X\check{J}$, for any $X\in T_{\phi(x_0)}N$\,.\\
\indent
It follows that $\dif\!\Phi_{J_0}:\bigl(T_{J_0}Z_M,(\J_M)_{J_0}\bigr)\to
\bigl(T_{\check{J}_0}Z_N,(\J_N)_{\check{J}_0}\bigr)$ is complex linear if and only if
$A(J_0X)J_0=J_0\circ(A(X)J_0)$\,, for any $X\in T_{x_0}M$.\\
\indent
To prove (ii)$\iff$(iii)\,, firstly, note that, by considering $\dif\!\phi$ as a section of
${\rm Hom}\bigl(TM,\phi^*(TN)\bigr)$\,, we have $\dif\!\phi\circ J=J\circ\dif\!\phi$\,,
for any $J\in Z_M$\,. By taking the covariant derivative of this equality we obtain
\begin{equation*}
(\nabla_X\!\dif\!\phi)\circ J-J\circ(\nabla_X\!\dif\!\phi)=(A(X)J)\circ\dif\!\phi\;,
\end{equation*}
for any $J\in Z_M$ and $X\in T_{\p_M(J)}M$. The proof follows.
\end{proof}

\begin{rem}
The method of Proposition \ref{prop:qtwist} can be applied in several other contexts.
For example, let $(M^m,c,D)$ be a Weyl space, $\dim M=m$\,, and
let $1\leq r\leq\tfrac12\,m$\,. If $r<\tfrac12m$ let $\p:P\to M$ be the bundle of skew-adjoint $f$-structures
on $(M^m,c)$ whose kernels have dimension $m-2r$ (any $F\in P$ is a skew-adjoint linear map on
$\bigl(T_{\p(F)}M,c_{\p(F)}\bigr)$ such that $F^3+F=0$ and $\dim({\rm ker}F)=m-2r$\,).
If $r=\tfrac12\,m$\,, ($m$ even)\,, let $P$ be the bundle of positive orthogonal complex structures on $(M^m,c)$\,.\\
\indent
Then $D$ induces a connection $\H\subseteq TP$ on $P$. Define $\H^0\subseteq\H$ such that
$\dif\!\p(\H^0_F)=T^{0;F}_{\p(F)}M$,
where $T^{0;F}_{\p(F)}M$ is the eigenspace of $F$ corresponding to $0$\,, $(F\in P)$\,.
Also, define $\H^{0,1}\subseteq\H^{\C}$ such that $\dif\!\p(\H^{0,1}_F)=T^{0,1;F}_{\p(F)}M$,
where $T^{0,1;F}_{\p(F)}M$ is the eigenspace of $F$ corresponding to $-{\rm i}$\,, $(F\in P)$\,.\\
\indent
Let $\Fa$ be the almost $f$-structure on $P$ whose eigendistributions corresponding to $0$ and $-{\rm i}$
are $\H^0$ and $\H^{0,1}\oplus({\rm ker}\dif\!\p_M)^{0,1}$, respectively. Then
$\t_{m,r}=(P,M,\p,\Fa)$ is an almost twistorial structure on $M$; see \cite{Pan-tm} for
the characterisation of the integrability of $\t_{m,r}$\,.\\
\indent
Now, let $(M^{2n},c_M,D^M)$ and $(N^{2n-1},c_N,D^N)$ be Weyl spaces; denote by
$\t^M_{2n,n}=(P_M,M,\p_M,\J)$ and $\t^N_{2n-1,n-1}=(P_N,N,\p_N,\Fa)$ the corresponding
almost twistorial structures.\\
\indent
Let $\phi:(M^{2n},c_M)\to(N^{2n-1},c_N)$ be a horizontally conformal submersion. There exists
a unique map $\Phi:P_M\to P_N$ such that $\p_N\circ\Phi=\phi\circ\p_M$ and
$$\dif\!\phi\bigl(T^{0,1;J}_{\p_M(J)}M\bigr)=T^{0;\Phi(J)}_{\p_N(\Phi(J))}N\oplus T^{0,1;\Phi(J)}_{\p_N(\Phi(J))}N\;,$$
for any $J\in P_M$\,.\\
\indent
The following assertions are equivalent:\\
\indent
\quad(i) $\phi:(M^{2n},\t^M_{2n,n})\to(N^{2n-1},\t^N_{2n-1,n-1})$ is twistorial, with respect to $\Phi$
(that is, $\Phi:(P_M,\J)\to(P_N,\Fa)$ is holomorphic; equivalently,
$\dif\!\Phi(T^{0,1}P_M)\subseteq T^0P_N\oplus T^{0,1}P_N$).\\
\indent
\quad(ii) $(D\!\dif\!\phi)(T^{0,1;J}_{\p_M(J)}M,T^{0,1;J}_{\p_M(J)}M)\subseteq
\dif\!\phi(T^{0,1;J}_{\p_M(J)}M)$, for any $J\in P_M$\,, where $D$ is the connection,
on ${\rm Hom}\bigl(TM,\phi^*(TN)\bigr)$\,, induced by $D^M$ and $D^N$.\\
\indent
In the particular case $n=2$\,, from the above equivalence it follows quickly the known
(see \cite{LouPan-II} and the references therein) characterisation of twistorial maps from
four-dimensional conformal manifolds to three-dimensional Weyl spaces; also, recall that then,
if (i) or (ii) holds, $(M^4,c_M)$ is anti-self-dual if and only if $(N^3,c_N,D^N)$ is Einstein--Weyl.\\
\indent
If $n\geq3$ and (i) or (ii) holds then $(M^{2n},c_M)$ is flat if and only if
$D^N$ is the Levi-Civita connection of constant curvature local representatives of $c_N$\,.
\end{rem}

\indent
Similarly to Proposition \ref{prop:qtwist}\,, we obtain the following result.

\begin{prop} \label{prop:qtwist'}
Let $M$ and $N$ be quaternionic manifolds; denote by\/ $\nabla^M$ and\/ $\nabla^N$ the quaternionic
connections of\/ $M$ and $N$, respectively.\\
\indent
Let $\phi:M\to N$ be a quaternionic map with respect to some map $\Phi:Z_M\to Z_N$\,;
suppose that $\phi$ is of rank at least one.\\
\indent
Then the following assertions are equivalent:\\
\indent
\quad{\rm (i)} $\phi:(M,\t'_M)\to(N,\t'_N)$ is twistorial, with respect to $\Phi$.\\
\indent
\quad{\rm (ii)} $(\nabla\!\dif\!\phi)(T^{1,0;J}_{\p_M(J)}M,T^{0,1;J}_{\p_M(J)}M)\subseteq T^{0,1;J}_{\phi(\p_M(J))}N$,
for any $J\in Z_M\bigl(=\phi^*(Z_N)\bigr)$\,, where $\nabla$ is the connection,
on ${\rm Hom}\bigl(TM,\phi^*(TN)\bigr)$\,, induced by\/ $\nabla^M$ and\/ $\nabla^N$.\\
\indent
\quad{\rm (iii)} $A(JX)J=-J\circ(A(X)J)$, for any $J\in Z_M$ and $X\in T_{\p_M(J)}M$, where $A$ is the difference
between the connections, on $Q_M\bigl(=\phi^*(Q_N)\bigr)$\,, induced by\/ $\nabla^M$ and\/ $\nabla^N$.
\end{prop}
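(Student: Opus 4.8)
The plan is to follow the proof of Proposition~\ref{prop:qtwist} essentially line by line; the only genuinely new ingredient is a single sign, which I first isolate. The structures $\J'_M$ and $\J'_N$ agree with $\J_M$ and $\J_N$ on the horizontal distribution $\H$ and differ from them exactly by conjugating the fibrewise almost complex structure (their $(1,0)$-bundles contain $(\ker\dif\!\p_M)^{0,1}$, respectively $(\ker\dif\!\p_N)^{0,1}$, in place of the $(1,0)$-parts). Concretely, on a vertical tangent vector $v$ at $J_0\in Z_M$ the complex structure $\J'_M$ acts by $v\mapsto-J_0\circ v$, whereas $\J_M$ acts by $v\mapsto J_0\circ v$; the same holds over $N$.

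For (i)$\Longleftrightarrow$(iii) I would repeat the reduction used before: restricting to the dense open set where $\phi$ has constant rank, its image is locally a quaternionic submanifold of $N$, so Proposition~\ref{prop:qsubmanifold} lets me further assume $\phi$ submersive. With $J_0\in Z_M$, $x_0=\p_M(J_0)$, a local section $S$ of $\phi$ through $x_0$ with $T_{x_0}S$ preserved by $J_0$, an extension $J$ of $J_0$ over $S$ and the section $\check{J}$ of $Z_N$ with $J=\phi^*(\check{J}\,)$, the differential $\dif\!\Phi_{J_0}$ sends a horizontal lift $X^{\H}$ to $(\dif\!\phi\,X)^{\H}+A(X)J_0$, that is, to its horizontal plus its vertical part. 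The horizontal parts transform correctly for free, since $\J'$ and $\J$ coincide on $\H$; feeding in the conjugated vertical complex structure, complex linearity of $\dif\!\Phi_{J_0}$ now reads $A(J_0X)J_0=-J_0\circ(A(X)J_0)$, which is precisely (iii).

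For (ii)$\Longleftrightarrow$(iii) I would reuse, verbatim, the identity obtained by covariantly differentiating $\dif\!\phi\circ J=J\circ\dif\!\phi$, namely $(\nabla_X\!\dif\!\phi)\circ J-J\circ(\nabla_X\!\dif\!\phi)=(A(X)J)\circ\dif\!\phi$. Taking now $X\in T^{1,0;J}_{\p_M(J)}M$ and evaluating on $Z\in T^{0,1;J}_{\p_M(J)}M$ — this is the one point where the bidegree in (ii) differs from Proposition~\ref{prop:qtwist} — the eigenvalue bookkeeping shows that (ii) is equivalent to $(A(X)J)\bigl(\dif\!\phi(Z)\bigr)=0$ for all such $X,Z$ and all $J$. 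Since $A(X)J$ lies in $(Q_M)_{\p_M(J)}$ orthogonal to $J$, it anticommutes with $J$; its component in the $(-{\rm i})$-eigenspace of $v\mapsto J\circ v$ annihilates $T^{0,1;J}N$, while its component in the $(+{\rm i})$-eigenspace acts injectively on $T^{0,1;J}N$. Because $\phi$ has rank at least one, $\dif\!\phi(T^{0,1;J}M)\neq0$, so the displayed vanishing forces the $(+{\rm i})$-component of $A(X)J$ to vanish; hence $A(X)J$ lies in the $(-{\rm i})$-eigenspace for every $X\in T^{1,0;J}M$, the case $X\in T^{0,1;J}M$ following by conjugation. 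This is exactly (iii). Note that, unlike for (i)$\Longleftrightarrow$(iii), no submersivity is needed here — only the rank-one hypothesis, to guarantee $\dif\!\phi(T^{0,1;J}M)\neq0$.

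The proof is thus a transcription, and there is no deep obstacle; the one thing demanding care is sign coherence. The minus sign in (iii) must emerge twice, independently and compatibly: once from the conjugated vertical complex structure in the (i)$\Longleftrightarrow$(iii) step, and once from pairing $T^{1,0;J}M$ rather than $T^{0,1;J}M$ against $T^{0,1;J}M$ in the (ii)$\Longleftrightarrow$(iii) step. I would verify once that both routes collapse to the single clean reformulation, namely that $A(X)J$ lies, for every $X\in T^{1,0;J}M$, in the $(-{\rm i})$-eigenspace (for $v\mapsto J\circ v$) of the $2$-plane in $(Q_M)_{\p_M(J)}$ orthogonal to $J$; this common statement underlies all three assertions.
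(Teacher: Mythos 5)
Your proof is correct and is essentially the argument the paper intends: the paper gives no separate proof of this proposition, stating only that it is obtained ``similarly to Proposition~\ref{prop:qtwist}'', and your transcription carries out exactly that, with the sign in (iii) correctly traced both to the conjugated vertical complex structure of $\J'$ and to the $(1,0)$--$(0,1)$ bidegree in (ii), the two routes collapsing to the same eigenspace condition on $A(X)J$.
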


\indent
Next, we prove the following result.

\begin{thm} \label{thm:qtwist}
Let $\phi:M\to N$ be a map between quaternionic manifolds and let
$\Phi:Z_M\to Z_N$ be such that $\p_N\circ\Phi=\phi\circ\p_M$\,.\\
\indent
If the zero set of the differential of $\phi$ has empty interior then the following assertions are equivalent:\\
\indent
\quad{\rm (i)} $\phi:(M,\t_M)\to(N,\t_N)$ is twistorial, with respect to $\Phi$.\\
\indent
\quad{\rm (ii)} $\phi:M\to N$ is quaternionic, with respect to $\Phi$.
\end{thm}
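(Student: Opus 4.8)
The plan is to prove the two implications separately, treating $(\mathrm{i})\Rightarrow(\mathrm{ii})$ as the soft direction and $(\mathrm{ii})\Rightarrow(\mathrm{i})$ as the one carrying the content. For $(\mathrm{i})\Rightarrow(\mathrm{ii})$ I would simply project the holomorphicity of $\Phi$ down to $M$. Fix $J\in Z_M$ and $v\in\H^{1,0}_J$. Holomorphicity gives $\dif\!\Phi(v)\in T^{1,0}_{\Phi(J)}Z_N=\H^{1,0}_{\Phi(J)}\oplus(\ker\dif\!\p_N)^{1,0}$; applying $\dif\!\p_N$ and using $\p_N\circ\Phi=\phi\circ\p_M$ together with $\dif\!\p_N\bigl(T^{1,0}Z_N\bigr)=T^{1,0;\Phi(J)}N$ and $\dif\!\p_M\bigl(\H^{1,0}_J\bigr)=T^{1,0;J}M$, one obtains $\dif\!\phi\bigl(T^{1,0;J}_{\p_M(J)}M\bigr)\subseteq T^{1,0;\Phi(J)}_{\phi(\p_M(J))}N$. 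Conjugating (everything in sight is real) yields the analogous inclusion for the $(0,1)$-spaces, so $\dif\!\phi$ intertwines the $\pm\mathrm{i}$-eigenspaces of $J$ and of $\Phi(J)$; this is exactly $\dif\!\phi\circ J=\Phi(J)\circ\dif\!\phi$, the quaternionicity of $\phi$. This argument is pointwise and uses nothing about the rank (where $\dif\!\phi=0$ the identity is vacuous).

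For $(\mathrm{ii})\Rightarrow(\mathrm{i})$ I would first restrict to the dense open set $U$ on which $\dif\!\phi\neq0$, where $\phi$ has rank at least one and Proposition~\ref{prop:qtwist} applies: there twistoriality of $\phi$ is equivalent to condition~(iii), namely $A(JX)J=J\circ(A(X)J)$. Thus the whole matter reduces to verifying (iii) for a quaternionic map. To this end I would differentiate the identity $\dif\!\phi\circ J=J\circ\dif\!\phi$ (with $\dif\!\phi$ viewed as a section of $\mathrm{Hom}\bigl(TM,\phi^*(TN)\bigr)$ and $J$ a local section of $Q_M=\phi^*(Q_N)$), obtaining, exactly as in the proof of Proposition~\ref{prop:qtwist}, the relation $(A(X)J)\circ\dif\!\phi=(\nabla_X\!\dif\!\phi)\circ J-J\circ(\nabla_X\!\dif\!\phi)$. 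The decisive extra ingredient is that $\nabla^M$ and $\nabla^N$ are torsion-free, so the second fundamental form $\nabla\!\dif\!\phi$ is symmetric; feeding this symmetry into the displayed relation, and using that $A(X)$ is skew-symmetric on the Euclidean bundle $Q_N$ (equivalently that $A(X)J$ obeys the Clifford relations of $Q_N$), should force (iii).

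The point at which I expect to have to work is precisely this last step. Writing $A(X)=\omega(X)\times\,\cdot\,$ with $\omega$ a $Q_M$-valued $1$-form, condition (iii) becomes the pair of scalar identities $\omega^{L}(JX)=\omega^{K}(X)$ and $\omega^{K}(JX)=-\omega^{L}(X)$ for every oriented orthonormal frame $(J,K,L)$ of $Q_M$. Extracting the components $\omega^{K},\omega^{L}$ from the relation above, via the Clifford identities $A(X)J\circ K+K\circ A(X)J=-2\langle A(X)J,K\rangle\,\mathrm{Id}$, expresses them through $\nabla\!\dif\!\phi$, and the symmetry $(\nabla\!\dif\!\phi)(X,Y)=(\nabla\!\dif\!\phi)(Y,X)$ is exactly what makes the two sides match. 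Conceptually, the assignment $\nabla\!\dif\!\phi\mapsto A$ is ${\rm Sp}(1)\!\cdot\!{\rm GL}(m,\Hq)$-equivariant, and the symmetric tensors land only in the summand of $T^*M\otimes Q_M$ cut out by (iii), so the complementary, ``(iii)-violating'' component of $A$ vanishes.

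One caveat worth recording is that $A$ itself depends on the chosen quaternionic connections, whereas the particular combination occurring in (iii) does not; this is consistent with the fact, established in Example~\ref{exm:qtwiststr}, that $\J_M$ and $\J_N$ are connection-independent. Finally, once (iii) holds on $U$, Proposition~\ref{prop:qtwist} gives that $\Phi$ is holomorphic over $U$; since $U$ is dense and the holomorphicity equation $\J_N\circ\dif\!\Phi=\dif\!\Phi\circ\J_M$ is a closed condition, $\Phi$ is holomorphic on all of $Z_M$, which is assertion $(\mathrm{i})$.
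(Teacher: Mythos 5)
Your proposal proves the right implications and its overall strategy is sound, but it routes the substantive direction (ii)$\Rightarrow$(i) through assertion (iii) of Proposition~\ref{prop:qtwist} (the algebraic condition on $A$), whereas the paper verifies assertion (ii) of that proposition (the inclusion $(\nabla\!\dif\!\phi)(T^{0,1;J},T^{0,1;J})\subseteq T^{0,1;J}$) after splitting $TM=\V\oplus\H$ with $\V=\ker\dif\!\phi$. The two routes diverge most on the vertical directions: the paper handles $(\nabla\!\dif\!\phi)(\V^{0,1;J},\cdot)$ by invoking the \emph{integrability} of $\t_M$ --- it chooses $J$ integrable so that $s^J:(M,J)\to(Z_M,\J_M)$ is holomorphic --- while your scheme needs no such input there. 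Indeed, for $V\in\V$ the relation $(\nabla_X\!\dif\!\phi)\circ J-J\circ(\nabla_X\!\dif\!\phi)=(A(X)J)\circ\dif\!\phi$ evaluated on $\V$ (where the right-hand side vanishes) gives $(\nabla\!\dif\!\phi)(X,JV)=J(\nabla\!\dif\!\phi)(X,V)$; writing $D_V=(\nabla\!\dif\!\phi)(\cdot,V)$ and using symmetry to get $D_{JV}=J\circ D_V$, one finds
\begin{equation*}
A(JV)J=D_{JV}\circ J-J\circ D_{JV}=J D_V J+D_V=J\circ\bigl(D_V\circ J-J\circ D_V\bigr)=J\circ(A(V)J)\;,
\end{equation*}
which is exactly (iii) on $\V$ --- a genuinely shorter argument than the paper's. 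On the horizontal directions, however, your claim that ``symmetry plus the Clifford identities forces (iii)'' is under-justified as stated: symmetry of $\nabla\!\dif\!\phi|_{\H\times\H}$ together with $Q$-compatibility does not yield (iii) by formal manipulation alone; what it yields, via Proposition~\ref{prop:qconnections}, is that $\nabla\!\dif\!\phi|_{\H\times\H}=S^{\a}$ for some $\a$, after which one still needs the explicit computation of Proposition~\ref{prop:projector} (or the equivalent check that $S^{\a}$ produces an $\o$ of the correct type for every $J$). Your ``equivariance'' fallback is a legitimate way to package this, but it amounts to importing the same two ingredients the paper uses for its horizontal step, so the work you flag as remaining is real and is not dispensed with by symmetry alone. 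Finally, note that propagating holomorphy of $\Phi$ from the dense set where $\dif\!\phi\neq0$ presupposes continuity of $\Phi$ there and up to the boundary; this is supplied by Proposition~\ref{prop:firstpropr}(ii) on the open set where the rank is at least one, and is handled in the paper by simply restricting to $\p_M^{-1}(M\setminus F)$, which is dense in $Z_M$.
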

\begin{proof}
Obviously, (i)$\Longrightarrow$(ii)\,. Thus, it is sufficient to prove (ii)$\Longrightarrow$(i)\,.\\
\indent
Let $F\subseteq M$ be the zero set of the differential of $\phi$\,. As $M\setminus F$ is dense in $M$
and $\p_M$ is open, we have $\p_M^{-1}(M\setminus F)$ dense in $Z_M$\,. Thus, we may suppose that, at each point,
$\phi$ has rank at least one.\\
\indent
By Proposition \ref{prop:qtwist}\,, it is sufficient to prove that if $\phi$ is
quaternionic then
$$(\nabla\!\dif\!\phi)(T^{0,1;J}_{\p_M(J)}M,T^{0,1;J}_{\p_M(J)}M)\subseteq T^{0,1;J}_{\phi(\p_M(J))}N\;,$$
for any $J\in Z_M\bigl(=\phi^*(Z_N)\bigr)$\,.\\
\indent
As in the proof of Proposition \ref{prop:qtwist}\,, we may suppose $\phi$ submersive. Denote $\V={\rm ker}\dif\!\phi$
and let $\H$ be a distribution on $M$, complementary to $\V$, and which is preserved by $Z_M$ (for example, let
$\H$ be the orthogonal complement of $\V$ with respect to some Hermitian metric on $M$); as usual, we identify
$\H=\phi^*(TN)$\,.\\
\indent
Let $J$ be an admissible complex structure (locally) defined on $M$. As $\V$ and $\H$ are invariant under $J$,
we have decompositions $\V^{\C}=\V^{0,1;J}\oplus\V^{1,0;J}$ and $\H^{\C}=\H^{0,1;J}\oplus\H^{1,0;J}$.\\
\indent
Because $\phi$ is quaternionic, we have $\H^{0,1;J}=\phi^*(T^{0,1;J}N)$\,.\\
\indent
Let $V$ and $X$ be sections of $\V^{0,1;J}$ and $T^{0,1;J}M$, respectively. As the section of $Z_M$ corresponding
to $J$ is a holomorphic map from $(M,J)$ to $(Z_M,\J_M)$\,, we have that $\nabla^M_{\,V}X$ is a section of $T^{0,1;J}M$.
Hence, $(\nabla\!\dif\!\phi)(V,X)=-\dif\!\phi(\nabla^M_{\,V}X)$ is a section of $\H^{0,1;J}$\,.\\
\indent
{}From the fact that $\nabla^M$ and $\nabla^N$ are torsion-free it follows that there exists a
section $\a$ of $\H^*$ such that $(\nabla\!\dif\!\phi)(X,Y)=S^{\a}(X,Y)$\,, for any $X,Y\in\H$
(cf.\ Proposition \ref{prop:qconnections}\,). Hence, by Proposition \ref{prop:projector}\,,
$(\nabla\!\dif\!\phi)(\H^{0,1;J},\H^{0,1;J})\subseteq\H^{0,1;J}$.\\
\indent
The proof is complete.
\end{proof}

\indent
{}From Theorem \ref{thm:qtwist} we obtain the following result (which, also, holds for a more general
class of twistorial maps).

\begin{cor} \label{cor:q_analytic}
Any quaternionic map between quaternionic manifolds is real-analytic, at least, outside the frontier
of the zero set of its differential.
\end{cor}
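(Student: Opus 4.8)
The plan is to deduce real-analyticity from the integrable twistorial picture established in Theorem \ref{thm:qtwist}, using the fact that a holomorphic map between complex manifolds is real-analytic and that the projections $\p_M,\p_N$ are real-analytic fibrations with respect to the canonical real-analytic structures introduced in Example \ref{exm:qtwiststr}. First I would reduce to the open set $M\setminus\partial F$, where $F$ is the zero set of $\dif\!\phi$ and $\partial F$ is its frontier; this set decomposes into the interior $\mathrm{int}\,F$ of $F$ and the open set $M\setminus F$ on which $\dif\!\phi$ is nowhere zero, and the claim is local so it suffices to treat each piece separately. On $\mathrm{int}\,F$ the map $\phi$ is locally constant, hence trivially real-analytic, so the substance is confined to $M\setminus F$.

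On $M\setminus F$ the differential of $\phi$ has rank at least one at every point, so by Theorem \ref{thm:qtwist} the map $\phi$ is twistorial with respect to $\Phi$; that is, $\Phi:(Z_M,\J_M)\to(Z_N,\J_N)$ is holomorphic. The next step is to equip $M$ and $N$ with the canonical real-analytic structures furnished by the integrability of $\t_M$ and $\t_N$ (Example \ref{exm:qtwiststr}), with respect to which $\p_M$ and $\p_N$ become real-analytic, and indeed holomorphic, submersions; here I use that $(Z_M,\J_M)$ and $(Z_N,\J_N)$ are complex manifolds and that $\p_M,\p_N$ are the twistorial projections. A holomorphic map between complex manifolds is in particular real-analytic, so $\Phi$ is real-analytic. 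Since $\p_N\circ\Phi=\phi\circ\p_M$ with $\p_M$ a surjective real-analytic submersion admitting local real-analytic sections and $\p_N$ real-analytic, composing $\Phi$ with such a local section of $\p_M$ on the left and applying $\p_N$ on the right exhibits $\phi$ locally as a composition of real-analytic maps, whence $\phi$ is real-analytic on $M\setminus F$.

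The main obstacle I anticipate is purely the bookkeeping of the real-analytic structures: one must check that the real-analytic structure on $M$ relative to which $\phi$ is to be shown real-analytic is precisely the one carried by Example \ref{exm:qtwiststr}, and that $\p_M$ admits local real-analytic sections so that the factorisation $\phi=\p_N\circ\Phi\circ s$ (for a local section $s$ of $\p_M$) is legitimate. This is where the identification $Z_M=\phi^*(Z_N)$ and the compatibility $\Phi\circ J=\check J\circ\phi$ used throughout Section \ref{section:3} must be invoked to guarantee that the twistorial data descend correctly. Granting this, the parenthetical remark that the result holds for a more general class of twistorial maps follows because the only ingredient used is the holomorphy of the lift together with the real-analyticity of the twistorial projections, neither of which is special to the quaternionic case.
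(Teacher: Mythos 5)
Your proposal is correct and follows exactly the route the paper intends: the paper derives the corollary directly from Theorem \ref{thm:qtwist}, and your decomposition of $M\setminus\partial F$ into $\mathrm{int}\,F$ (where $\phi$ is locally constant) and $M\setminus F$ (where the theorem yields holomorphy of $\Phi$, hence real-analyticity of $\phi=\p_N\circ\Phi\circ s$ via a local real-analytic section $s$ of $\p_M$, using the canonical real-analytic structures of Example \ref{exm:qtwiststr}) is precisely the intended argument. No gaps.
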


\indent
A condition equivalent to assertion (ii) of the following result is used in \cite{IanMazVil}\,,
for maps between quaternionic K\"ahler manifolds.

\begin{thm} \label{thm:qtwist'}
Let $\phi:M\to N$ be a map between quaternionic manifolds and let
$\Phi:Z_M\to Z_N$ be such that $\p_N\circ\Phi=\phi\circ\p_M$\,.\\
\indent
If the zero set of the differential of $\phi$ has empty interior then the following assertions are equivalent:\\
\indent
\quad{\rm (i)} $\phi:(M,\t_M')\to(N,\t_N')$ is twistorial, with respect to $\Phi$.\\
\indent
\quad{\rm (ii)} $\phi$ is quaternionic, with respect to $\Phi$, and the connections on \mbox{$Q_M(=\phi^*(Q_N))$,}
induced by the quaternionic connections of $M$ and $N$, are equal.\\
\indent
\quad{\rm (iii)} $\phi$ is a totally geodesic map which is quaternionic, with respect to $\Phi$.
\end{thm}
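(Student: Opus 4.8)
The plan is to prove the equivalences (i)$\iff$(ii)$\iff$(iii) by combining Proposition \ref{prop:qtwist'} with the algebraic criterion in Proposition \ref{prop:projector}(iii) and the affine description of quaternionic connections in Proposition \ref{prop:qconnections}. As in the proof of Theorem \ref{thm:qtwist}, I would first reduce to the case where $\phi$ has rank at least one everywhere: since the zero set $F$ of $\dif\!\phi$ has empty interior and $\p_M$ is open, $\p_M^{-1}(M\setminus F)$ is dense in $Z_M$, so the holomorphy condition defining twistoriality need only be checked on this dense set, and all the tensorial identities in (ii) and (iii) extend by continuity.

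The core of the argument is the chain (i)$\iff$(iii)$\iff$(ii). For (i)$\iff$(iii), I would invoke Proposition \ref{prop:qtwist'}: twistoriality with respect to $\t'$ is equivalent to assertion (iii) there, namely $A(JX)J=-J\circ(A(X)J)$ for all $J\in Z_M$ and $X\in T_{\p_M(J)}M$, where $A$ is the difference of the induced connections on $Q_M=\phi^*(Q_N)$. The key step is to show this skew-commutation condition on $A$ forces $A$ to vanish identically. Here I would exploit the structure of $\mathrm{Im}\Hq$: writing the condition out for an oriented orthonormal frame $(I,J,K)$ of $Q_M$ satisfying the quaternionic identities, and using that $A(X)$ is a derivation-type endomorphism of the rank-three bundle $Q_M$ (equivalently, that $A(X)$ acts on $Q_M\cong\mathrm{Im}\Hq$ as an element of $\mathfrak{so}(3)$), the identity $A(JX)J=-J\circ(A(X)J)$ relates the value of $A$ at $JX$ to its value at $X$ through the adjoint action of $J$. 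Iterating over the sphere $Z_M$ of admissible complex structures and using that these span, I expect the only solution to be $A=0$. This gives precisely assertion (ii): the connections induced on $Q_M$ by $\nabla^M$ and $\nabla^N$ coincide.

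For (ii)$\iff$(iii), I would use Proposition \ref{prop:qconnections} to translate the condition $A=0$ into a statement about the second fundamental form. Reducing to $\phi$ submersive as before, and writing $\dif\!\phi$ as a section of $\mathrm{Hom}(TM,\phi^*(TN))$, the identity obtained at the end of the proof of Proposition \ref{prop:qtwist}, namely $(\nabla_X\!\dif\!\phi)\circ J-J\circ(\nabla_X\!\dif\!\phi)=(A(X)J)\circ\dif\!\phi$, shows that $A=0$ is equivalent to $\nabla\!\dif\!\phi$ commuting with every $J\in Z_M$; combined with the quaternionic property $\dif\!\phi\circ J=J\circ\dif\!\phi$ and Proposition \ref{prop:projector}, this forces the symmetric tensor $\nabla\!\dif\!\phi$ (the second fundamental form of $\phi$) to vanish, so $\phi$ is totally geodesic. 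Conversely, total geodesy makes $\nabla\!\dif\!\phi=0$, which trivially satisfies both the condition in (ii) via the displayed identity and the inclusion in Proposition \ref{prop:qtwist'}(ii).

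The main obstacle I anticipate is the purely algebraic step of showing that the skew-symmetry relation $A(JX)J=-J\circ(A(X)J)$ over all $J\in Z_M$ forces $A\equiv0$; this is where the nonintegrable structure $\t'$ differs sharply from the integrable $\t$ (whose analogous condition $A(JX)J=+J\circ(A(X)J)$ in Proposition \ref{prop:qtwist}(iii) is automatically satisfied, reflecting that every quaternionic map is $\t$-twistorial). I would handle it by fixing an admissible frame $(I,J,K)$ and carefully examining how the action of each element of the sphere on $Q_M$ constrains the $\mathfrak{so}(3)$-valued form $A$, checking that the sign change genuinely rigidifies the connection. Once $A=0$ is established, the passage to total geodesy via Proposition \ref{prop:qconnections} and Proposition \ref{prop:projector}(iii) should be routine.
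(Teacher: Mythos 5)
Your outline is correct and reaches the right conclusions, but at the decisive step it takes a genuinely different, and more laborious, route than the paper, and that step is precisely the one you leave as an expectation rather than a proof. The paper's argument for (i)$\Rightarrow$(ii) is the two--line shortcut that you explicitly notice but do not exploit: since (i) forces $\phi$ to be quaternionic, Theorem \ref{thm:qtwist} gives that $\phi$ is \emph{also} twistorial for the integrable structures, so both $A(JX)J=J\circ(A(X)J)$ (Proposition \ref{prop:qtwist}(iii)) and $A(JX)J=-J\circ(A(X)J)$ (Proposition \ref{prop:qtwist'}(iii)) hold; adding them gives $A(JX)J=0$ for all $J$ and $X$, whence $A=0$ because $A(JX)J$ lies in the bundle of division algebras $\widetilde{Q}_M$. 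You propose instead to extract $A=0$ from the minus identity alone by an $\mathfrak{so}(3)$ computation. That computation does succeed: writing $A(X)=a(X)\times\cdot$ with $a$ a $Q_M$-valued $1$-form and using that composition in $\widetilde{Q}_M$ is quaternion multiplication, the identity becomes $a(JX)\times J=-a(X)+\langle a(X),J\rangle J$; evaluating on an admissible frame $(I_1,I_2,I_3)$ and composing two of the resulting relations via $I_1I_2=I_3$ yields $\langle a(X),I_1\rangle=-\langle a(X),I_1\rangle$, and cyclically, so $a=0$. Thus your route is viable and even proves something slightly stronger (the skew identity alone rigidifies the connection, with no appeal to Theorem \ref{thm:qtwist}), but as written it is not yet a proof --- the frame computation is the entire content of (i)$\Rightarrow$(ii) in your version and must be carried out. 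One further caution on your (ii)$\Rightarrow$(iii): the statement that ``$\nabla_X\dif\!\phi$ commutes with every $J$'' does not by itself kill $\nabla\!\dif\!\phi$ (after all, $\dif\!\phi$ itself commutes with every $J$ and is nonzero); you must combine the commutation with the \emph{symmetry} of $\nabla\!\dif\!\phi$ to get $(\nabla\!\dif\!\phi)(JX,JY)=-(\nabla\!\dif\!\phi)(X,Y)$ for every $J\in Z_M$, and then sum this over the frame $(I_1,I_2,I_3)$ to conclude $\nabla\!\dif\!\phi=0$ --- this is exactly the paper's remark that the vanishing of the $(1,1)$-component for all $J$ implies $\nabla\!\dif\!\phi=0$, and it, too, deserves to be made explicit.
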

\begin{proof}
This can be proved as follows. By Proposition \ref{prop:qtwist'}\,, we have
(ii)$\Longrightarrow$(i) and (iii)$\Longrightarrow$(i). Thus, it remains to prove
(i)$\Longrightarrow$(ii),(iii).\\
\indent
If (i) holds then $\phi$ is quaternionic and, by Theorem \ref{thm:qtwist}\,,
$\phi:(M,\t_M)\to(N,\t_N)$ is twistorial. Thus, assertion (iii) of Proposition \ref{prop:qtwist} and
assertion (iii) of Proposition \ref{prop:qtwist'} are both satisfied. This shows that
(i)$\Longrightarrow$(ii)\,.\\
\indent
Finally, from Proposition \ref{prop:qtwist'} it follows quickly that the (1,1)-component of
$\nabla\!\dif\!\phi$ is zero. This implies $\nabla\!\dif\!\phi=0$ and the proof is complete.
\end{proof}

\indent
Next, we explain why, in Theorems \ref{thm:qtwist} and \ref{thm:qtwist'}\,, the assumption on the zero
set of the differential of the map cannot be weakened.

\begin{rem} \label{rem:qtconstant}
Let $M$ and $N$ be quaternionic manifolds. Suppose that $Z_M=M\times S^2$ is the trivial bundle
and let $\p:Z_M\to S^2$ be the projection.\\
\indent
Let $y\in N$ and let $T:S^2\to(Z_N)_y$ be an orientation preserving isometry. Then the constant map
$\phi:M\to N$\,, $x\mapsto y$\,, $(x\in M)$\,, is quaternionic, with respect to $\Phi=T\circ\p$\,.\\
\indent
On the other hand, $\phi$ is twistorial, with respect to $\Phi$, if and only if $M$ is hypercomplex.
\end{rem}

\indent
Note that, Theorems \ref{thm:qtwist} and \ref{thm:qtwist'} hold for any nonconstant real-analytic map
(without any assumption on the zero set of the differential of the map).

\section{Examples and further results} \label{section:4}

\indent
Firstly, we mention that, as any quaternionic submanifold corresponds to an injective
quaternionic immersion, in \cite{Tas-symqsubm} can be found many examples of quaternionic maps.
For example, we have the following:

\begin{exm}
Let ${\rm Gr}_2(m+2,\C)$ be the Grassmannian manifold of complex vector
subspaces, of dimension $2$\,, of $\C^{\!m+2}$, $(m\geq1)$\,.\\
\indent
This is a quaternionic manifold of (real) dimension $4m$\,.
Its twistor space is the flag manifold ${\rm F}_{1,m+1}(m+2,\C)$ of pairs $(l,p)$ with $l$ and $p$
complex vector subspaces of $\C^{\!m+2}$ of dimensions $1$ and $m+1$\,, respectively, such that
$l\subseteq p$\,. The projection $\p:{\rm F}_{1,m+1}(m+2,\C)\to{\rm Gr}_2(m+2,\C)$ is defined
by $\p(l,p)=l\oplus p^{\perp}$, for any $(l,p)\in{\rm F}_{1,m+1}(m+2,\C)$\,, where $p^{\perp}$
is the orthogonal complement of $p$ with respect to the canonical Hermitian product on $\C^{\!m+2}$.\\
\indent
Any injective complex linear map $A:\C^{\!m+2}\to\C^{\!n+2}$\,, $(1\leq m\leq n)$\,,
induces, canonically, a quaternionic map $\phi^A:{\rm Gr}_2(m+2,\C)\to{\rm Gr}_2(n+2,\C)$\,, with respect
to the map $\Phi^A:{\rm F}_{1,m+1}(m+2,\C)\to{\rm F}_{1,n+1}(n+2,\C)$ defined by
$\Phi^A(l,p)=\bigl(A(l),A(p)\oplus q\bigr)$\,,
for any $(l,p)\in{\rm F}_{1,m+1}(m+2,\C)$\,, where $q\subseteq\C^{\!n+2}$ is a fixed complement
of ${\rm im}A$ in $\C^{\!n+2}$. (Note that, if we choose another complement of ${\rm im}A$ in $\C^{\!n+2}$ then
$\Phi^A$ changes by a composition, to the left, with a holomorphic diffeomorphism of
${\rm F}_{1,n+1}(n+2,\C)$\,.)
\end{exm}

\indent
The next example shows that, besides quaternionic immersions, there are many other quaternionic maps.

\begin{exm} \label{exm:HP^m}
Let $\Hq\!P^m$ be the left quaternionic projective space of (real) dimension $4m$\,, $(m\geq1)$\,.
This is a quaternionic manifold (see \cite{Mar-70}\,).
Its twistor space is $\C\!P^{2m+1}$, where the projection $\p:\C\!P^{2m+1}\to\Hq\!P^m$ is
induced by the identification $\Hq^{\!m+1}=\C^{\!2m+2}$, through the morphism of Lie groups
$\C^{\!*}\to\Hq^{\!*}$\,.\\
\indent
Let $A:\Hq^{\!m+1}\to\Hq^{\!n+1}$ be a left $\Hq$-linear map, $(m,n\geq1)$\,. Then
$A$ induces two maps $\phi^A:\Hq\!P^m\setminus P_{\,\Hq}({\rm ker}A)\to \Hq\!P^n$ and
$\Phi^A:\C\!P^{2m+1}\setminus P_{\C}({\rm ker}A)\to\C\!P^{2n+1}$, where $P_{\,\Hq}({\rm ker}A)$
and $P_{\C}({\rm ker}A)$ are the quaternionic and complex projective spaces, respectively,
determined by ${\rm ker}A$\,.\\
\indent
Then $\phi^A$ is a quaternionic map, with respect to $\Phi^A$ (just note, for example, that $\Phi^A$ is
holomorphic).
\end{exm}

\begin{exm} \label{exm:q-Hopf}
Let $\p:\Hq^{\!m+1}\setminus\{0\}\to\Hq\!P^m$ be the Hopf fibration. Then $\p$ is a quaternionic map,
with respect to the canonical projection
$$\Pi:2(m+1)\mathcal{O}(1)\setminus0\to P\bigl(2(m+1)\mathcal{O}(1)\bigr)=\C\!P^{2m+1}\;,$$
where $\mathcal{O}(1)$ is the dual of the tautological line bundle over $\C\!P^1$.
\end{exm}

\indent
The following example, based on a construction of \cite{Swa-bundle}\,, is, essentially,
a generalization of Example \ref{exm:q-Hopf}\,.

\begin{exm}
Let $M$ be a quaternionic manifold of dimension $4m$\,, $(m\geq1)$\,, and
let $\bigl(P,M,{\rm Sp}(1)\cdot{\rm GL}(m,\Hq)\bigr)$ be its bundle of quaternionic frames
(that is, quaternionic linear isomorphisms from $\Hq^{\!m}$ to $T_xM$, $(x\in M)$\,).\\
\indent
Define $\rho:{\rm Sp}(1)\cdot{\rm GL}(m,\Hq)\to\Hq^{\!*}/\{\pm1\}$ by $\rho(a\cdot A)=\pm a$\,,
for any $a\cdot A\in {\rm Sp}(1)\cdot{\rm GL}(m,\Hq)$\,, where $\Hq^{\!*}=\Hq\setminus\{0\}$\,.\\
\indent
Denote $E=\rho(P)$\,. Then $\bigl(E,M,\Hq^{\!*}/\{\pm1\}\bigr)$ is a principal bundle. Furthermore,
the quaternionic connection of $M$ induces a principal connection $\H\subseteq TE$\,.
Let $\V={\rm ker}\,\p$\,, where $\p:E\to M$ is the projection.\\
\indent
Let $q\in S^2\subseteq{\rm Im}\Hq$\,. The multiplication to the right by $-q$ defines
a negative (linear) complex structure on $\Hq$ which, obviously, is invariant under the left action
of $\Hq^{\!*}$ on $\Hq$. Thus, $q$ induces on $\V$ a structure of complex vector bundle $J^{q,\V}$.\\
\indent
As $\widetilde{Q}_M$ is a bundle associated to $E$\,, we have
$\p^*(\widetilde{Q}_M)=E\times\Hq$\,. Together
with the fact that $\H=\p^*(TM)$ this induces a left action of $\Hq^{\!*}$ on $\H$. In particular,
$q$ induces on $\H$ a structure of complex vector bundle $J^{q,\H}$.\\
\indent
Obviously, $J^q=J^{q,\V}\oplus J^{q,\H}$ is an almost complex structure on $E$\,.\\
\indent
The morphism of Lie groups $\C^{\!*}\to\Hq^{\!*}$ given by
$a+b{\rm i}\mapsto a+bq$\,, $(a,b\in\R)$\,, induces a right action of $\C^{\!*}/\{\pm1\}$ on $E$\,.
Furthermore, the quotient of $E$ through this action is $Z_M$ and the projection
$\psi^q:(E,J^q)\to(Z_M,\J^M)$ is holomorphic.\\
\indent
From the fact that $\J^M$ is integrable and the holonomy group of $\H$ is contained by ${\rm Sp}(1)$
it follows that $J^q$ is integrable. Thus, $(J^{\rm i},J^{\rm j},J^{\rm k})$ defines a hypercomplex
structure on $E$\,. The complex structure $\J_E$ of its twistor space $Z_E(=E\times S^2)$ is characterised
by the following: $\J_E|_{\{e\}\times S^2}$ is the canonical complex structure of $S^2$ whilst
$\J_E|_{E\times\{q\}}=J^q$, for any $e\in E$ and $q\in S^2$\,.\\
\indent
Note that, $Z_M=E\times_{\chi}S^2$, where $\chi:\bigl(\Hq^{\!*}/\{\pm1\}\bigr)\times S^2\to S^2$
is defined by $\chi(\pm p,q)=pqp^{-1}$,
for any $\pm p\!\in\!\Hq^{\!*}/\{\pm1\}$ and $q\in S^2$. Let $\Pi:Z_E\to Z_M$ be the projection. Alternatively,
$\Pi$ can be defined by $\Pi(e,q)=\psi^q(e)$\,, for any $(e,q)\in Z_E$\,.\\
\indent
Then $\Pi:(Z_E,\J_E)\to(Z_M,\J_M)$ is holomorphic and $\p:E\to M$ is a quaternionic submersion,
with respect to $\Pi$\,.
\end{exm}

\indent
Next, we prove that Example \ref{exm:HP^m} gives all the quaternionic maps between open sets
of quaternionic projective spaces.

\begin{thm} \label{thm:HP^m}
Let $U$ be a connected open set of\/ $\Hq\!P^m$ and let $\phi:U\to\Hq\!P^n$ be a quaternionic map, $(m,n\geq1)$\,.\\
\indent
Then there exists an $\Hq$\!-linear map $A:\Hq^{\!m+1}\to\Hq^{\!n+1}$ such that $\phi=\phi^A|_U$ and, in
particular, $U\cap P_{\,\Hq}({\rm ker}A)=\emptyset$\,.
\end{thm}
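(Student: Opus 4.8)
We may assume that $\phi$ has rank at least one at some point $x_0\in U$; by Corollary \ref{cor:q_analytic} it is then real-analytic, and the zero set of $\dif\!\phi$ has empty interior, so Theorem \ref{thm:qtwist} applies. Write $\p_M:\C\!P^{2m+1}\to\Hq\!P^m$ and $\p_N:\C\!P^{2n+1}\to\Hq\!P^n$ for the twistor projections (Example \ref{exm:HP^m}). By Proposition \ref{prop:firstpropr} there is a unique $\Phi$ lifting $\phi$, and by Theorem \ref{thm:qtwist} it is holomorphic, for the standard complex structures, on a neighbourhood of the twistor line $L_0=\p_M^{-1}(x_0)$. The plan is to show that, near $L_0$, $\Phi$ is the projectivisation $[B]$ of a complex-linear map $B$, to recognise that $B$ is $\Hq$-linear, and then to descend to $\phi=\phi^A$ (with $A=B$) near $x_0$ and globalise by real-analyticity.

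First I record the fibrewise behaviour of $\Phi$. For each $x$ near $x_0$ it carries the twistor line $L_x=\p_M^{-1}(x)$ onto the twistor line $L'_{\phi(x)}=\p_N^{-1}(\phi(x))$, and, by Proposition \ref{prop:firstpropr}(ii)\,, this restriction is induced by the orientation preserving linear isometry $(Q_M)_x\to(Q_N)_{\phi(x)}$ of the isomorphism $Q_M=\phi^*(Q_N)$; hence $\Phi|_{L_x}\colon\C\!P^1\to\C\!P^1$ lies in ${\rm SO}(3)$ acting by M\"obius transformations, so it is a biholomorphism of degree one. Being linear, this isometry commutes with $J\mapsto-J$, so $\Phi$ intertwines the fibrewise antipodal (antiholomorphic) involutions $\s_M,\s_N$ of the two twistor spaces.

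The main step is to prove that $\Phi$ is projectively linear near $L_0$. Here $L_0$, as a projectively embedded line in $\C\!P^{2m+1}$, has ample normal bundle $\mathcal{O}(1)^{\oplus2m}$, and $\Phi$ is holomorphic along the whole of the compact curve $L_0$ with $\Phi|_{L_0}$ of degree one. Subtracting from $\Phi$ the projective-linear map $[B]$ matching its $1$-jet along $L_0$ (the first-order normal part is a bundle morphism $\mathcal{O}(1)^{\oplus2m}\to\mathcal{O}(1)^{\oplus2n}$, hence constant), the leading higher-order deviation is a holomorphic section over $L_0\cong\C\!P^1$ of a bundle built from ${\rm Sym}^k\bigl(\mathcal{O}(-1)^{\oplus2m}\bigr)$ tensored with the degree-one pull-backs of $N_{L'}$ and $TL'$; its summands have the form $\mathcal{O}(1-k)$ and $\mathcal{O}(2-k)$. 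For $k\geq2$ these carry no nonzero sections, except for the tangential $k=2$, $\mathcal{O}(0)$ summand, which is nonetheless excluded because a nonzero value there would produce a base point of $\Phi$ on $L_0$, contradicting holomorphy along the complete line. Thus the deviation vanishes and $\Phi=[B]$ near $L_0$. I expect this rigidity to be the main obstacle: it is exactly the point where the flatness of the twistorial structure of $\Hq\!P^m$ (that its twistor space is $\C\!P^{2m+1}$) is used, the fibrewise degree-one condition of Proposition \ref{prop:firstpropr}(ii) alone being insufficient.

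Finally, the intertwining of $\s_M$ and $\s_N$ forces $B$, after normalising its scale, to commute with right multiplication by $\mathbf{j}$; being already complex-linear, $B$ is then $\Hq$-linear (orientation preservation rules out the conjugate-linear alternative). Setting $A=B$ we obtain $\Phi=\Phi^A$ near $L_0$, whence $\phi=\phi^A$ near $x_0$. Since $\phi$ and $\phi^A$ are real-analytic and $U$ is connected, $\phi=\phi^A$ on all of $U$. Moreover $U$ cannot meet $P_{\,\Hq}({\rm ker}A)$: along that locus $\phi^A$ is indeterminate and admits no continuous extension, whereas $\phi$ is defined and smooth throughout $U$. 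Hence $U\cap P_{\,\Hq}({\rm ker}A)=\emptyset$ and $\phi=\phi^A|_U$, completing the proof.
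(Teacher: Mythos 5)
Your overall strategy (linearise $\Phi$, recognise $\Hq$\!-linearity from the real structures, descend to $\phi=\phi^A$ and globalise by analyticity) parallels the paper's, but your key step is a different, purely local rigidity argument at a single twistor line, and it has a gap exactly at the point you yourself flag as the main obstacle. The cohomological bookkeeping is right: after matching the $1$-jet of $\Phi$ along $L_0$ by a projective-linear $[B]$ (which exists and is unique, by a dimension count), every order-$k$ deviation in the normal direction lies in $H^0\bigl(\C\!P^1,\mathcal{O}(1-k)\bigr)=0$, and the tangential ones lie in $H^0\bigl(\mathcal{O}(2-k)\bigr)$, so the only survivor is the tangential $k=2$ term, a \emph{constant} quadratic form $q\in H^0\bigl({\rm Sym}^2(N_{L_0}^*)\otimes TL_0'\bigr)$. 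Your reason for excluding it --- that a nonzero $q$ ``would produce a base point of $\Phi$ on $L_0$'' --- is not a proof: the base point belongs to the particular rational map $[z_0^{\,2}:z_0z_1+q(z):z_0z_2:\cdots]$, and nothing forces your holomorphic $\Phi$ to agree with that map beyond order two. Indeed, writing $\Phi(u,v)=\bigl(u+\sum q_{ij}(u)v_iv_j+\cdots,\,Cv+\cdots\bigr)$ and comparing the two affine charts of $L_0$ only yields that the $q_{ij}$ are constant; no contradiction appears at the $2$-jet level, and once $q\neq0$ the induction breaks (the order-$3$ deviation is no longer a well-defined section of ${\rm Sym}^3N^*\otimes(\cdots)$). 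To close this you need a global input. One that fits your framework: since $\Phi$ is holomorphic on a full neighbourhood of the compact curve $L_0$ and $\deg\Phi|_{L_0}=1$, continuity of the degree $\int_L\Phi^*c_1(\mathcal{O}(1))$ gives $\deg\Phi|_L=1$ for \emph{every} projective line $L$ near $L_0$, so $\Phi$ maps nearby lines to lines; the image of the line $v=\beta u$ under the map above is a line only if $q(\beta)=0$, whence $q=0$. The paper avoids linearising at a single line altogether: it uses the family of twistor lines, the fact that $\Phi$-preimages of hyperplanes are open subsets of hyperplanes, the Griffiths--Harris argument to show the nonhomogeneous components of $\Phi$ divided by suitable linear forms are constant along twistor lines, and Lemma \ref{lem:1HP^m} to promote fibrewise constancy to genuine constancy.

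Two smaller points. First, your final globalisation is too quick: rank $\geq1$ at one point $x_0$ gives real-analyticity and the applicability of Theorem \ref{thm:qtwist} only near $x_0$, not on all of $U$, so ``$\phi$ and $\phi^A$ are real-analytic on the connected $U$'' is not yet available. The paper closes this by noting that a nonconstant $\phi^A$ has rank at least four everywhere, so an open--closed argument shows $\dif\!\phi$ is nowhere zero on $U$; you need this (or an equivalent) step. Second, your passage from ``$[B]$ intertwines the real structures'' to ``$B$ is $\Hq$\!-linear'' is essentially sound (one gets $jBj^{-1}=\lambda B$ with $|\lambda|=1$ and rescales), and is arguably cleaner than the paper's Lemma \ref{lem:2HP^m}, which instead works with the complexification ${\rm Gr}_2(2m+2,\C)$ and invokes a theorem of Alekseevsky--Marchiafava on maps sending quaternionic lines to quaternionic lines; but note that the paper's lemma carries a rank hypothesis precisely to control degenerate cases, which your sketch does not address.
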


To prove Theorem \ref{thm:HP^m}\,, we need to lemmas, the first of which is, most likely,
known but we do not have a reference for it.

\begin{lem} \label{lem:1HP^m}
Let $M$ be a quaternionic manifold and let $\t_M=(Z_M,M,\p_M,\J_M)$ be its twistorial structure.\\
\indent
Then a function $f:M\to\C$ is constant if and only if $f\circ\p_M:(Z_M,\J_M)\to\C$
is holomorphic.
\end{lem}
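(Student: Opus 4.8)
The plan is to prove the nontrivial implication, that holomorphicity of $f\circ\p_M$ forces $f$ to be constant; the converse is immediate, since if $f$ is constant then $f\circ\p_M$ is constant, hence holomorphic.

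First I would translate the holomorphicity of $g=f\circ\p_M$ into a pointwise condition on $\dif\!f$ over $M$. A $\C$-valued function on $(Z_M,\J_M)$ is holomorphic precisely when its differential annihilates the $-{\rm i}$-eigenbundle $T^{0,1;\J_M}Z_M=\H^{0,1}\oplus(\ker\dif\!\p_M)^{0,1}$ of $\J_M$ (Example \ref{exm:qtwiststr}). Now $g$ is constant along the fibres of $\p_M$, so $\dif\!g$ already vanishes on $\ker\dif\!\p_M$, and hence on $(\ker\dif\!\p_M)^{0,1}$; thus the only content of the condition is that $\dif\!g$ vanish on $\H^{0,1}$. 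Since $\dif\!g=\dif\!f\circ\dif\!\p_M$ and, by the definition of $\H^{1,0}$, the map $\dif\!\p_M$ restricts to a complex linear isomorphism $\H^{0,1}_J\to T^{0,1;J}_{\p_M(J)}M$, this says exactly that
\[
\dif\!f_{\p_M(J)}\big|_{T^{0,1;J}_{\p_M(J)}M}=0\quad\text{for every }J\in Z_M;
\]
equivalently, $\dif\!f_x$ is of type $(1,0)$ with respect to every admissible complex structure $J$ on $T_xM$.

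The key observation is then that the fibre $(Z_M)_x$, being the unit sphere of the real vector bundle $Q_M$, is antipodally symmetric: if $J\in(Z_M)_x$ then $-J\in(Z_M)_x$, and $T^{0,1;-J}_xM=T^{1,0;J}_xM$. Applying the vanishing above to both $J$ and $-J$ at a fixed $x$ therefore forces $\dif\!f_x$ to vanish on $T^{0,1;J}_xM\oplus T^{1,0;J}_xM=(T_xM)^{\C}$, whence $\dif\!f_x=0$. As $x$ ranges over $M$ we get $\dif\!f=0$, so $f$ is locally constant and, $M$ being connected, constant.

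I do not anticipate a genuine obstacle: the argument is formal once the first step is carried out, and that step is really just unwinding the definition of $\J_M$ to recognise that the holomorphicity of $f\circ\p_M$ is nothing more than the type-$(1,0)$ condition on $\dif\!f_x$ for every $J\in(Z_M)_x$. The only point requiring a little care is the eigenspace bookkeeping — checking that $\dif\!\p_M$ indeed identifies $\H^{0,1}_J$ with $T^{0,1;J}_xM$ and that passing to $-J$ interchanges the $(1,0)$- and $(0,1)$-eigenspaces — after which the antipodal trick collapses the conditions (a priori, one for each $J\in(Z_M)_x$) to the single conclusion $\dif\!f_x=0$.
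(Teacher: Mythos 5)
Your proof is correct and takes essentially the same approach as the paper: holomorphicity of $f\circ\p_M$ is unwound to the vanishing of $\dif\!f$ on $T^{0,1;J}M$ for every admissible $J$, from which $\dif\!f=0$ follows. The only difference is that you spell out, via the antipodal symmetry $J\mapsto -J$, the final step that the paper states without further comment.
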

\begin{proof}
If $f\circ\p_M:(Z_M,\J_M)\to\C$ is holomorphic then $f$ is holomorphic with respect to any
(local) admissible complex structure $J$ on $M$; equivalently, the differential of $f$ is zero
on $T^{0,1;J}M$.\\
\indent
It follows that $\dif\!f=0$ and the lemma is proved.
\end{proof}

\begin{lem}[cf.\ \cite{AleMar-Annali96}\,] \label{lem:2HP^m}
Let $A$ be a complex linear map from $\Hq^{\!m+1}=\C^{\!2m+2}$ to $\Hq^{\!n+1}=\C^{\!2n+2}$\,,
of complex rank at least $4$\,, $(m,n\geq1)$\,.
Suppose that there exists an open set $U\subseteq\Hq^{\!m+1}$ such that $A$ maps the intersection
of any quaternionic line (through the origin) with $U$ into a quaternionic line.\\
\indent
Then $A$ is quaternionic linear.
\end{lem}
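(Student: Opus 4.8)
The plan is to reduce the statement to linear algebra and then to a pointwise relation between $A$ and the quaternionic structures. Write $V=\Hq^{\!m+1}$ and $W=\Hq^{\!n+1}$, and let $J_0$ and $j$ denote left multiplication by ${\rm i}$ and by ${\rm j}$, respectively, on each of $V,W$; thus $J_0$ is the complex structure realising $V=\C^{\!2m+2}$, $W=\C^{\!2n+2}$, while $j$ is $J_0$-antilinear with $j^2=-{\rm Id}$ and $jJ_0=-J_0j$. Since $\Hq=\C\oplus\C{\rm j}$ as a left $\C$-module, a quaternionic line through the origin is exactly a $j$-invariant complex $2$-plane, namely $\Hq v=\C v\oplus\C jv$ for $v\neq0$. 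For $v$ with $Av\neq0$ the line $\Hq v$ meets $U$ (at $v$), so $A(\Hq v\cap U)$ lies in a quaternionic line of $W$; as $\Hq v\cap U$ is open in $\Hq v$ it $\C$-spans $\Hq v$, whence $A(\Hq v)\subseteq\Hq(Av)$. So the hypothesis is equivalent to
\[
A(jv)=\alpha(v)\,Av+\beta(v)\,j(Av)\qquad(Av\neq0),
\]
for uniquely determined $\alpha(v),\beta(v)\in\C$, using that $Av,j(Av)$ are $\C$-independent.

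First I would remove the role of $U$. The failure of $A(jv)\in\Hq(Av)$ is measured by the $3\times3$ minors of $\bigl[\,Av\mid j(Av)\mid A(jv)\,\bigr]$, which are real-analytic functions of $v$ (the entries are $\C$-linear or $\C$-antilinear in $v$). They vanish on the open set $\{v\in U:Av\neq0\}$, and $V\setminus\ker A$ is connected (real codimension $\geq2$), so by the identity theorem the displayed relation holds for \emph{all} $v\notin\ker A$. Next I would show that $W':={\rm im}\,A$ is $j$-invariant: whenever $\beta(v)\neq0$ the restriction $A|_{\Hq v}$ has rank two, so $j(Av)=\beta(v)^{-1}\bigl(A(jv)-\alpha(v)Av\bigr)\in W'$; and $\beta$ cannot vanish on an open set (otherwise the additivity argument below would force $\alpha$ constant, hence $\alpha=0$ by $\C$-linearity, hence $A\circ j=0$, i.e.\ $A=0$). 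As such $Av$ span $W'$ and $j$ is $J_0$-antilinear, $j(W')\subseteq W'$. Thus $W'$ is a quaternionic subspace, and since ${\rm rank}_{\C}A\geq4$ its quaternionic dimension is $\geq2$.

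The main step, and the genuinely delicate one, is the constancy of $\alpha$ and $\beta$; this is exactly where ${\rm rank}_{\C}A\geq4$ is used. Because $W'$ has quaternionic dimension $\geq2$, for a dense set of pairs $v,w$ the lines $\Hq(Av),\Hq(Aw)$ are independent, i.e.\ $Av,j(Av),Aw,j(Aw)$ are $\C$-independent; evaluating the relation at $v,w,v+w$ and matching these four vectors gives $\alpha(v)=\alpha(w)=\alpha(v+w)$ and likewise for $\beta$. To see $\alpha\equiv0$, note the relation forces $\alpha(\lambda v)=(\bar\lambda/\lambda)\,\alpha(v)$, so $\alpha({\rm i}w)=-\alpha(w)$; since $\Hq\bigl(A({\rm i}w)\bigr)=\Hq(Aw)$, additivity for $v,{\rm i}w$ yields $\alpha(v)=\alpha({\rm i}w)=-\alpha(w)$, while additivity for $v,w$ yields $\alpha(v)=\alpha(w)$, whence $\alpha\equiv0$. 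Then $\beta$ is locally constant, hence constant $=\beta_0$ on the connected set $V\setminus\ker A$, and by $\R$-linearity $A\circ j=\beta_0\,(j\circ A)$ on all of $V$. Squaring and using $j^2=-{\rm Id}$ gives $|\beta_0|=1$.

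Finally I would identify the intertwined structures. The operator $\beta_0 j$ is left multiplication by the unit imaginary quaternion $\beta_0{\rm j}$, hence lies in $Z_W$ and is orthogonal to $J_0$, and $A\circ j=(\beta_0 j)\circ A$. Together with $\C$-linearity $A\circ J_0=J_0\circ A$, the map $A$ intertwines two orthonormal complex structures of $V$ with two of $W$; so, exactly as in the proof of Proposition \ref{prop:qlinearmaps1}(ii), it intertwines the third ($J_0 j$ with $J_0\beta_0 j$) and hence the whole sphere $Z_V$ with $Z_W$ through the induced isometry, i.e.\ $A$ is quaternionic linear. I expect the constancy of $\alpha,\beta$ to be the main obstacle: it hinges on producing two independent quaternionic lines in the image, which is precisely what ${\rm rank}_{\C}A\geq4$ guarantees once the image has been shown to be $j$-invariant.
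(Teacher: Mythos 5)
Your proof is correct in substance, but it takes a genuinely different route from the paper's. The paper globalizes the hypothesis by passing to the complexification ${\rm Gr}_2(2m+2,\C)$ of $\Hq\!P^m$: the holomorphic map $\psi^A$ induced by $A$ intertwines the real structures $C_m$, $C_n$ on an open set, hence everywhere by analytic continuation, so $A$ maps \emph{every} quaternionic line into a quaternionic line, and the conclusion is then delegated to \cite[Theorem 1.1]{AleMar-Annali96}. You instead globalize by noting that the failure of $A(jv)\in\Hq(Av)$ is measured by polynomial minors vanishing on an open set, and you then reprove the relevant case of the cited theorem from scratch via the decomposition $A(jv)=\alpha(v)Av+\beta(v)j(Av)$ together with the additivity and scaling identities for $\alpha,\beta$. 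This yields a self-contained, purely linear-algebraic argument that makes transparent where ${\rm rank}_{\C}A\geq4$ enters (two $\Hq$-independent lines in the image), at the cost of more bookkeeping; the paper's route is shorter but relies on the complexified picture (which it needs anyway for Theorem \ref{thm:HP^m}) and on an external theorem. Two spots in your write-up should be tightened, though neither is a genuine gap. First, the opening sentence should read ``for $v\in U$ with $Av\neq0$''; as written, the pointwise relation is asserted before the extension off $U$ has been carried out (your second paragraph then does this correctly). Second, the parenthetical ruling out $\beta$ vanishing on an open set needs one more line: either observe that $\alpha,\beta$ are real-analytic on the connected set $V\setminus\ker A$, so that $\beta$ vanishing on an open set forces $\beta\equiv0$ there, or apply the same vanishing-of-minors device to $\bigl[\,Av\mid A(jv)\,\bigr]$ to propagate $A(jv)\in\C Av$ to all of $V$; after that, your additivity argument in this degenerate case needs only $Av,Aw$ to be $\C$-independent --- so there is no circularity with the $j$-invariance of ${\rm im}\,A$ --- and it gives $A\circ j=0$, hence $A=0$, contradicting the rank hypothesis.
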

\begin{proof}
The (germ unique) complexification of $\Hq\!P^m$ is ${\rm Gr}_2(2m+2,\C)$ (to show this,
use the fact that ${\rm GL}(m+1,\Hq)$ acts transitively on $\Hq\!P^m$).\\
\indent
Clearly, $A$ determines a holomophic map $\psi^A:\mathfrak{A}\to(\Hq\!P^n)^{\C}$,
where $\mathfrak{A}$ is the (open) subset of ${\rm Gr}_2(2m+2,\C)$ formed of those
two-dimensional complex vector subspaces of $\C^{\!2m+2}$ whose intersection with ${\rm ker}A$ is $\{0\}$\,.
Note that, $\psi^A$ has a pole along $(\Hq\!P^m)^{\C}\setminus\mathfrak{A}$\,.\\
\indent
From the hypothesis, it follows that the restriction of $\psi^A$ to some open set intertwines
the conjugations $C_m$ and $C_n$ of $(\Hq\!P^m)^{\C}$ and $(\Hq\!P^n)^{\C}$, respectively.
By analyticity, we obtain that $\psi^A$ and $C_n\circ\psi^A\circ C_m$ determine
a holomorphic map on $\mathfrak{A}\cup C_m(\mathfrak{A})$\,. Consequently, $C_m(\mathfrak{A})=\mathfrak{A}$
(otherwise, $\psi^A$ could be holomorphically extended over points of $(\Hq\!P^m)^{\C}\setminus\mathfrak{A}$)
and $C_n\circ\psi^A=\psi^A\circ C_m$\,. Hence, $A$ maps any quaternionic line whose intersection
with ${\rm ker}A$ is $\{0\}$ onto a quaternionic line.\\
\indent
Note that, there are no quaternionic lines which intersect ${\rm ker}A$ along complex vector
spaces of dimension $1$ (otherwise, the map $\Phi^A$ from $\C\!P^{2m+1}\setminus P_{\C}({\rm ker}A)$ to
$\C\!P^{2n+1}$, determined by $A$\,, would induce a continuous extension of $\psi^A$ over points of
$(\Hq\!P^m)^{\C}\setminus\mathfrak{A}$).\\
\indent
Thus, $A$ maps any quaternionic line into a quaternionic line and, by \cite[Theorem 1.1]{AleMar-Annali96}\,,
the proof of the lemma is complete.
\end{proof}

\begin{rem}
With the same notations as in Lemma \ref{lem:2HP^m} and its proof, let
$\phi^A:\Hq\!P^m\setminus P_{\,\Hq}({\rm ker}A)\to \Hq\!P^n$ be the quaternionic map
determined by $A$. Then $\psi^A$ is the complexification of $\phi^A$.
\end{rem}

\begin{proof}[Proof of Theorem \ref{thm:HP^m}]
Let $\phi$ be a quaternionic map from a connected open set of $\Hq\!P^m$ to $\Hq\!P^n$.\\
\indent
We, firstly, assume the differential of $\phi$ nowhere zero.\\
\indent
Then, by Corollary \ref{cor:q_analytic}\,, $\phi$ is real-analytic.
Therefore, it is sufficient to find an $\Hq$-linear map $A$ such that $\phi=\phi^A$ on
some open set. Moreover, similarly to the proof of Proposition \ref{prop:qtwist}\,,
we may assume $\phi$ submersive, surjective and with connected fibres.\\
\indent
By Theorem \ref{thm:qtwist}\,, $\phi$ is twistorial, with respect to some holomorphic map $\Phi$
between open sets $U$ and $V$ of $\C\!P^{2m+1}$ and $\C\!P^{2n+1}$, respectively. Also,
$U$ and $V$ contain families of projective lines (the twistor lines) which are
mapped one onto another by $\Phi$\,. Moreover, as the complexification of
$\Hq\!P^m$ contains many complex-quaternionic submanifolds, the preimage through $\Phi$
of a hyperplane of $\C\!P^{2n+1}$ (not disjoint from $V$) is an open
subset of a hyperplane of $\C\!P^{2m+1}$.\\
\indent
An argument similar to the one used in \cite[page 65]{GriHar-Principles}
shows that the nonhomogeneous components of $\Phi$ divided by suitable linear
functions are constant along the twistor lines. Hence, by Lemma \ref{lem:1HP^m}\,,
these meromorphic functions are constant.\\
\indent
We have thus proved that $\Phi$ is induced by some complex linear map
from $\C^{\!2m+2}$ to $\C^{\!2n+2}$. Then the proof (under the assumption that the differential
of $\phi$ is nowhere zero) follows from Lemma \ref{lem:2HP^m}\,.\\
\indent
Finally, note that, if an $\Hq$\!-linear map $A:\Hq^{\!m+1}\to\Hq^{\!n+1}$ induces a nonconstant
(quaternionic) map $\phi^A:\Hq\!P^m\setminus P_{\,\Hq}({\rm ker}A)\to \Hq\!P^n$ then its real rank
is, at least, eight. Hence, at each point, the rank of the differential of $\phi^A$ is at least four.
It follows quickly that, the differential of any nonconstant quaternionic map, between open sets
of quaternionic projective spaces, is nowhere zero. The proof is complete.
\end{proof}

\begin{rem}
Theorem \ref{thm:HP^m} can be, also, proved by applying an inductive
argument, based on a result of \cite{Mar-70}\,, to show that in terms of non-homogeneous
quaternionic projective coordinates $(x_{\!j})_{j=1,\dots,m}$\,, and $(y_{\a})_{\a=1,\dots,n}$\,,
on $\mathbb HP^m$ and $\mathbb  HP^n$, respectively, any quaternionic map
$\phi:\Hq\!P^m\to\Hq\!P^n$ is given by
$$y_{\alpha}=(x_{\!j}a^j_0+a^0_0)^{-1}(x_{\!j}a^j_{\a}+a^0_{\a})\;,\qquad(\alpha=1,\dots,n)\;,$$
where the coefficients $a_{\a}^j$ are constant quaternions (and the Einstein summation convention
is used).
\end{rem}

\indent
We end this section with the following immediate consequence of Theorem \ref{thm:HP^m}\,.

\begin{cor} \label{cor:HP^m}
Any (globally defined) quaternionic map from $\Hq\!P^m$ to $\Hq\!P^n$ is induced
by an injective $\Hq$\!-linear map $\Hq^{\!m+1}\to\Hq^{\!n+1}$; in particular, $m\leq n$\,.
\end{cor}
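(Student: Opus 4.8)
The plan is to apply Theorem \ref{thm:HP^m} in the special case $U=\Hq\!P^m$. Since $\Hq\!P^m$ is itself a connected open subset of $\Hq\!P^m$, the theorem applies directly to any quaternionic map $\phi:\Hq\!P^m\to\Hq\!P^n$ and produces an $\Hq$-linear map $A:\Hq^{\!m+1}\to\Hq^{\!n+1}$ with $\phi=\phi^A|_{\Hq\!P^m}$ and $\Hq\!P^m\cap P_{\,\Hq}(\ker A)=\emptyset$. All of the substantive work is already carried out in Theorem \ref{thm:HP^m}; what remains is to read off the two conclusions of the corollary from this kernel condition.

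First I would observe that, because $\ker A$ is a (left) $\Hq$-submodule of $\Hq^{\!m+1}$, the set $P_{\,\Hq}(\ker A)$ is by definition a subset of $\Hq\!P^m=P_{\,\Hq}(\Hq^{\!m+1})$. Hence the disjointness statement $\Hq\!P^m\cap P_{\,\Hq}(\ker A)=\emptyset$ provided by Theorem \ref{thm:HP^m} forces $P_{\,\Hq}(\ker A)=\emptyset$. Since every nonzero vector of $\ker A$ spans a quaternionic line contained in $\ker A$, the emptiness of $P_{\,\Hq}(\ker A)$ is equivalent to $\ker A=\{0\}$; that is, $A$ is injective. This is the one conceptual point of the argument: passing from an arbitrary connected open set $U$ to the whole projective space collapses the disjointness hypothesis of Theorem \ref{thm:HP^m} into injectivity of $A$.

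With $A$ injective, its natural domain of definition $\Hq\!P^m\setminus P_{\,\Hq}(\ker A)$ is all of $\Hq\!P^m$, so $\phi=\phi^A$ is globally the map induced by the injective $\Hq$-linear map $A$, which is the first assertion. For the inequality $m\leq n$, I would use that $A:\Hq^{\!m+1}\to\Hq^{\!n+1}$ is an injective left $\Hq$-linear map between free left $\Hq$-modules: since $\Hq$ is a division algebra, $A$ carries an $\Hq$-basis of $\Hq^{\!m+1}$ to an $\Hq$-linearly independent family in $\Hq^{\!n+1}$, whence $m+1\leq n+1$, i.e.\ $m\leq n$.

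I do not expect any genuine obstacle here, as the statement is an immediate consequence of Theorem \ref{thm:HP^m}. The only points requiring (routine) care are the two elementary equivalences just invoked, namely that $P_{\,\Hq}(\ker A)=\emptyset$ is equivalent to $\ker A=\{0\}$, and that injectivity of an $\Hq$-linear map over the division algebra $\Hq$ forces the rank inequality $m+1\leq n+1$.
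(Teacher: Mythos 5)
Your argument is exactly the one the paper intends: the corollary is stated there as an ``immediate consequence'' of Theorem \ref{thm:HP^m}, and your unwinding --- take $U=\Hq\!P^m$, note that the disjointness condition $U\cap P_{\,\Hq}({\rm ker}A)=\emptyset$ then forces ${\rm ker}A=\{0\}$, and conclude $m\leq n$ from injectivity of an $\Hq$-linear map over the division algebra $\Hq$ --- is precisely what that phrase compresses. The proposal is correct and takes essentially the same (indeed the only) route.
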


\appendix

\section{Comparison with other notions of quaternionicity}

\indent
Firstly, we mention the fairly standard notion of `hypercomplex (triholomorphic) map' between
almost hypercomplex manifolds. Obviously, any such map is quaternionic with
respect to the induced almost quaternionic structures on its domain and codomain.\\
\indent
Secondly, there have been studied maps, between quaternionic K\"ahler manifolds $M$ and $N$,
which pull-back the K\"ahler forms of elements of $Z_N$ to K\"ahler forms of elements of $Z_M$ (see \cite{LiZha}\,).
As the K\"ahler forms are nondegenerate, this condition applies only to immersions and to constant
maps, and, therefore, it is too restrictive (also, the presence of a Riemannian metric is required).\\
\indent
Thirdly, there exists the notion of `regular quaternionic function', of one quaternionic variable,
introduced in \cite{Fue} (see \cite{Sud} for a modern presentation and further results)
and later generalised to maps between hyper-K\"ahler manifolds (see \cite{CheLi}\,,
\cite{Hay} and the references therein):

\begin{defn} \label{defn:qFueter}
Let $V$ and $W$ be quaternionic vector spaces and let \mbox{$T:Z_V\to Z_W$} be an orientation preserving isometry.\\
\indent
We say that a map $t:V\to W$ is \emph{linear Fueter-quaternionic, with respect to $T$},
if $t$ is real linear and for some (and, consequently, any) positive orthonormal basis $(I,J,K)$ of $Q_V$
we have $t=T(I)\circ t\circ I+T(J)\circ t\circ J+T(K)\circ t\circ K$.
\end{defn}

\indent
With the same notations as in Definition \ref{defn:qFueter}\,, let $\Cal_T$ be the
endomorphism defined by
$\Cal_T(t)=T(I)\circ t\circ I+T(J)\circ t\circ J+T(K)\circ t\circ K$, $(t\in{\rm Hom}_{\R}(V,W)\,)$\,,
where $(I,J,K)$ is a positive orthonormal basis of $Q_V$\,. A straightforward calculation
shows that $\Cal_T$ does not depend of the positive orthonormal basis $(I,J,K)$ and,
in particular, the notion of `Fueter-quaternionic map', between almost quaternionic manifolds,
is well-defined.\\
\indent
Furthermore, $\Cal_T$ satisfies the equation $(\Cal_T)^2+2\,\Cal_T-3=0$\,.
Let $\F_T$ and $\Q_T$ be the eigenspaces of $\Cal_T$ corresponding to $1$ and $-3$\,, respectively.
Then ${\rm Hom}_{\R}(V,W)=\Q_T\oplus\F_T$ and $\F_T$ is the space of
linear Fueter-quaternionic maps, with respect to $T$, whilst $\Q_T$ is the space of
linear quaternionic maps, with respect to $T$ \cite{Hay}\,.
Apparently, this would suggest that Fueter-quaternionic maps are `anti-quaternionic'.
In fact, by reformulating results mentioned in \cite{CheLi} and \cite{Hay}\,, the following proposition
can be easily obtained.

\begin{prop} \label{prop:QF}
Let $V$\! and $W$ be quaternionic vector spaces and let $T$ be an orientation preserving isometry
from $Z_V$ to $Z_W$\,.\\
\indent
Then, for any line through the origin $d\subseteq Q_V$\,, we have $\Q_{T\circ S_d}\subseteq\F_T$\,,
where $S_d$ is the symmetry in $d$\,. Moreover, $\F_T$ is generated by $\bigcup_d\Q_{T\circ S_d}$\,.
\end{prop}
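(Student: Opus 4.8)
The plan is to analyze the operator $\Cal_T$ explicitly in terms of an admissible basis and reduce everything to a pointwise computation on $\mathrm{Hom}_\R(V,W)$. First I would fix a positive orthonormal basis $(I,J,K)$ of $Q_V$ and record the key algebraic facts already available: the endomorphism $\Cal_T$ satisfies $(\Cal_T)^2+2\,\Cal_T-3=0$, so it diagonalizes with eigenvalues $1$ and $-3$, giving the splitting $\mathrm{Hom}_\R(V,W)=\Q_T\oplus\F_T$. The space $\Q_T$ (eigenvalue $-3$) consists of quaternionic linear maps with respect to $T$, and $\F_T$ (eigenvalue $1$) of Fueter-quaternionic maps. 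Since $\Cal_T$ is independent of the chosen positive orthonormal basis, I may compute $\Cal_{T\circ S_d}$ for a convenient choice of $d$ and then transport the result by the transitivity of $\mathrm{SO}(3)$ on lines through the origin in $Q_V$.

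The heart of the first claim, $\Q_{T\circ S_d}\subseteq\F_T$, is the inclusion $\Cal_T|_{\Q_{T\circ S_d}}=\mathrm{Id}$. I would take $d$ to be the line spanned by $I$, so that $S_d$ acts on $Q_V$ by fixing $I$ and sending $J\mapsto -J$, $K\mapsto -K$; hence $(T\circ S_d)(I)=T(I)$, $(T\circ S_d)(J)=-T(J)$, $(T\circ S_d)(K)=-T(K)$. For $t\in\Q_{T\circ S_d}$ the defining relation is $t\circ I=T(I)\circ t$, $t\circ J=-T(J)\circ t$, $t\circ K=-T(K)\circ t$ (this is the $-3$-eigenspace condition for $\Cal_{T\circ S_d}$ reformulated via Proposition \ref{prop:qlinearmaps1}-style reasoning). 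Substituting these into $\Cal_T(t)=T(I)\circ t\circ I+T(J)\circ t\circ J+T(K)\circ t\circ K$, the first term yields $T(I)^2\circ t=-t$ while the latter two each yield $+t$ (because the two sign changes in $J,K$ cancel against $T(J)^2=T(K)^2=-\mathrm{Id}$), giving $\Cal_T(t)=(-1+1+1)\,t=t$; thus $t\in\F_T$. This is the computation I expect to be the main (though still routine) technical step, and the place where keeping track of signs is the only real danger.

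For the second claim, that $\F_T$ is generated by $\bigcup_d\Q_{T\circ S_d}$, I would argue by a dimension/spanning count. From the equivalence already noted, $\Q_{T\circ S_d}$ is the space of quaternionic linear maps with respect to the isometry $T\circ S_d$, whose dimension is computable (via Proposition \ref{prop:qlinearmaps2}) and equals $\dim\F_T$ when $\dim V,\dim W$ are compatible. Since by the first part every $\Q_{T\circ S_d}$ sits inside $\F_T$, it suffices to show that as $d$ ranges over all lines the subspaces $\Q_{T\circ S_d}$ are not all equal, i.e.\ that their union is not contained in a proper subspace of $\F_T$. I would verify this by exhibiting, for two distinct lines $d_1,d_2$, maps in $\Q_{T\circ S_{d_1}}$ and $\Q_{T\circ S_{d_2}}$ that are linearly independent, and then invoke the known eigenspace-decomposition result of \cite{Hay} together with a rotation argument: the family $\{S_d\}$ is acted on transitively by the stabilizer of nothing in particular, so the spans sweep out all of $\F_T$.

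The subtle point, and what I expect to require the most care, is ensuring that the generation statement is genuinely surjective and not merely that the union lies in $\F_T$; concretely, one must rule out a degenerate situation where all $\Q_{T\circ S_d}$ coincide. This cannot happen once $t\neq 0$ forces the associated isometry to be unique (Proposition \ref{prop:qlinearmaps1}(i)), so distinct $d$ give genuinely distinct quaternionicity conditions and hence distinct subspaces whose union spans. I would close the argument by matching dimensions, using that $\dim\F_T=\dim\Q_T$ and that the individual $\Q_{T\circ S_d}$ have the same dimension as $\Q_T$, so finitely many of them already span $\F_T$.
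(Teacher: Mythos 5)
The paper itself gives no proof of this proposition (it is attributed to a reformulation of results in the cited references), so your attempt can only be judged on its own merits. The first half is fine: taking $d=\R I$ for a positive orthonormal basis $(I,J,K)$ of $Q_V$, a map $t\in\Q_{T\circ S_d}$ satisfies $t\circ I=T(I)\circ t$, $t\circ J=-T(J)\circ t$, $t\circ K=-T(K)\circ t$, and substituting into $\Cal_T(t)$ gives $(-1+1+1)\,t=t$, so $t\in\F_T$; the basis-independence of $\Cal_T$ then handles an arbitrary $d$. This is correct and is the natural computation.

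The second half has a genuine gap. Your dimension count is wrong: if $V\cong\Hq^{\!m}$ and $W\cong\Hq^{\!n}$, then $\dim_{\R}\Q_T=4mn$ while $\dim_{\R}\F_T=12mn$ (each summand $t\mapsto T(I_i)\circ t\circ I_i$ of $\Cal_T$ is trace-free, so $\dim\F_T-3\dim\Q_T=\mathrm{tr}\,\Cal_T=0$); thus $\dim\F_T=3\dim\Q_T$, not $\dim\F_T=\dim\Q_T$ as you assert at the end. Consequently two lines can never suffice, and the claim that the subspaces ``sweep out all of $\F_T$'' is not an argument: a union of $4mn$-dimensional subspaces of a $12mn$-dimensional space need not span unless you exhibit enough of them with independent sum. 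The clean repair is to set $R_i(t)=-T(I_i)\circ t\circ I_i$ for $(I_1,I_2,I_3)=(I,J,K)$: these are three commuting involutions of ${\rm Hom}_{\R}(V,W)$ with $R_1R_2R_3={\rm Id}$, so the space splits into the four joint eigenspaces with sign patterns $(+,+,+)$, $(+,-,-)$, $(-,+,-)$, $(-,-,+)$. Since $\Cal_T=-(R_1+R_2+R_3)$, the first of these is $\Q_T$ and the sum of the remaining three is $\F_T$; moreover the $(+,-,-)$ eigenspace is exactly $\Q_{T\circ S_{\R I}}$, and cyclically for the other two. Hence $\F_T=\Q_{T\circ S_{\R I}}\oplus\Q_{T\circ S_{\R J}}\oplus\Q_{T\circ S_{\R K}}$, which yields the generation statement with three lines already sufficing.
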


\indent
Finally, let $U$, $V$ and $W$ be quaternionic vector spaces and let $T':Z_U\to Z_V$ and
$T'':Z_V\to Z_W$ be orientation preserving isometries. Also, let $d\subseteq Q_V$ be a line through the origin.
If $t':U\to V$ and $t'':V\to W$ are linear quaternionic maps, with respect to $S_d\circ T'$ and $T''\circ S_d$\,,
respectively, then, by Proposition \ref{prop:QF}\,, $t'$ and $t''$ are Fueter-quaternionic,
with respect to $T'$ and $T''$, respectively. However, $t''\circ t'$ is Fueter-quaternionic,
with respect to $T''\circ T'$, if and only if $t''\circ t'=0$\,.


\begin{thebibliography}{10}

\bibitem{AleMar-Lincei93}
D.~V.~Alekseevsky, S.~Marchiafava, Quaternionic-like structures on a manifold: Note 2. Automorphism groups
and their interrelations, \textit{Atti Accad. Naz. Lincei Cl. Sci. Fis. Mat. Natur. Rend. Lincei (9) Mat. Appl.},
{\bf 4} (1993) 53--61.
\bibitem{AleMar-Report93}
D.~V.~Alekseevsky, S.~Marchiafava, A report on quaternionic-like structures on a manifold,
Proceedings of the International Workshop on Differential Geometry and its Applications (Bucharest, 1993),
\textit{Politehn. Univ. Bucharest Sci. Bull. Ser. A Appl. Math. Phys.}, {\bf 55}  (1993) 9--34.
\bibitem{AleMar-Annali96}
D.~V.~Alekseevsky, S.~Marchiafava, Quaternionic structures on a manifold and subordinated structures,
\textit{Ann. Mat. Pura Appl.}, {\bf 171} (1996) 205--273.
\bibitem{AleMarPon-99}
D.~V.~Alekseevsky, S.~Marchiafava, M.~Pontecorvo, Compatible complex structures on almost quaternionic
manifolds, \textit{Trans. Amer. Math. Soc.}, {\bf 351} (1999) 997--1014.
\bibitem{Bon}
E.~Bonan, Sur les $G$-structures de type quaternionien, \textit{Cahiers Topologie G\'eom. Diff\'erentielle},
{\bf 9} (1967) 389--461.
\bibitem{CheLi}
J.~Chen, J.~Li, Quaternionic maps between hyperk\"ahler manifolds, \textit{J. Differential Geom.},
{\bf 55} (2000) 355--384.
\bibitem{EelSal}
J.~Eells, S.~Salamon, Twistorial construction of harmonic maps of surfaces into four-manifolds,
\textit{Ann. Scuola Norm. Sup. Pisa Cl. Sci. (4)}, {\bf 12} (1985) 589--640.
\bibitem{Fue}
R.~Fueter, Die Funktionentheorie der Differentialgleichungen $\D u=0$ und $\D\D u=0$ mit
vier reellen Variablen, \textit{Comment. Math. Helv.}, {\bf 7} (1935) 307--330.
\bibitem{GriHar-Principles}
P.~Griffiths, J.~Harris, \textit{Principles of algebraic geometry},
Wiley Classics Library, John Wiley \& Sons, Inc., New York, 1978.
\bibitem{Hay}
A.~Haydys, Nonlinear Dirac operator and quaternionic analysis, Preprint, Bielefeld University, 2007
(\href{http://xxx.arxiv.org/abs/0706.0389}
{arXiv:0706.0389}).
\bibitem{IanMazVil}
S.~Ianu\c s, R.~Mazzocco, G.~E.~V\^\i lcu, Harmonic maps between quaternionic K\"ahler manifolds,
\textit{J. Nonlinear Math. Phys.}, (in press).
\bibitem{LiZha}
J.~Li, X.~Zhang, Quaternionic maps between quaternionic K\"ahler manifolds, \textit{Math. Z.},
{\bf 250} (2005) 523--537.
\bibitem{LouPan-II}
E.~Loubeau, R.~Pantilie, Harmonic morphisms between Weyl spaces and twistorial maps II,
Preprint, IMAR, Bucharest, 2006 (\href{http://xxx.arxiv.org/abs/math/0610676}
{arXiv:math/0610676}).
\bibitem{Mar-70}
S.~Marchiafava, Sulle variet\`a a struttura quaternionale generalizzata, \textit{Rend. Mat. (6)},
{\bf 3} (1970) 529--545.
\bibitem{Opr-q77}
V.~Oproiu, Almost quaternal structures, \textit{An. \c Stiin\c t. Univ. ''Al. I. Cuza'' Ia\c si
Sec\c t. I a Mat.}, {\bf 23} (1977) 287--298.
\bibitem{Opr-q84}
V.~Oproiu, Integrability of almost quaternal structures, \textit{An. \c Stiin\c t. Univ. ''Al. I. Cuza''
Ia\c si Sec\c t. I a Mat.}, {\bf 30} (1984) 75--84.
\bibitem{Pan-tm}
R.~Pantilie, On a class of twistorial maps, \textit{Differential Geom. Appl.}, (in press).
\bibitem{Sal-dg_qm}
S.~M.~Salamon, Differential geometry of quaternionic manifolds, \textit{Ann. Sci. \'Ecole Norm. Sup. (4)},
{\bf 19} (1986) 31--55.
\bibitem{Sud}
A.~Sudbery, Quaternionic analysis, \textit{Math. Proc. Cambridge Philos. Soc.}, {\bf 85} (1979) 199--224.
\bibitem{Swa-bundle}
A.~Swann, HyperK\"ahler and quaternionic K\"ahler geometry, \textit{Math. Ann.}, {\bf 289} (1991) 421--450.
\bibitem{Tas-symqsubm}
H.~Tasaki, Quaternionic submanifolds in quaternionic symmetric spaces,
\textit{Tohoku Math. J. (2)}, {\bf 38} (1986) 513--538.

\end{thebibliography}
\end{document}